\documentclass[12pt]{article}


\usepackage{amsmath}
\usepackage{amsthm}
\usepackage{amssymb}
\usepackage{cite}
\usepackage{url}
\usepackage[multiple]{footmisc}
\usepackage{graphicx}
\usepackage{epstopdf}
\usepackage{chngcntr}
\usepackage{enumitem}
\usepackage{hhline}
\usepackage{array}
\usepackage{hyperref}
\usepackage{color}
\usepackage{array}
\usepackage{multirow}
\usepackage{epstopdf}   

\hypersetup{colorlinks=false}


\voffset-1.1cm
\setlength\textwidth{15.6cm}
\setlength\textheight{22.4cm}

\lefthyphenmin=2
\righthyphenmin=3

\if@twoside
  \setlength\oddsidemargin{.45cm}
  \setlength\evensidemargin{.05cm}
\else
  \setlength\oddsidemargin{.45cm}
  \setlength\evensidemargin{.45cm}
\fi

\numberwithin{equation}{section}
\counterwithin{figure}{section}
\counterwithin{table}{section}

\theoremstyle{plain}
\newtheorem{theorem}{Theorem}[section]
\newtheorem{lemma}[theorem]{Lemma}

\newtheorem{algorithm}  [theorem]{Algorithm}

\theoremstyle{definition}
\newtheorem{definition}{Definition}[section]
\newtheorem{assumption}{Assumption}[section]
\newtheorem{example}{Example}[section]

\theoremstyle{remark}
\newtheorem{remark}{Remark}[section]



\newcommand{\norm}[1]{\left\|#1\right\|}

\newcommand{\abs}[1]{\left\vert#1\right\vert}
\newcommand{\spr}[1]{\left\langle\,#1\,\right\rangle}
\newcommand{\kl}[1]{\left(#1\right)}
\newcommand{\Kl}[1]{\left\{#1\right\}}

\definecolor{aog}{rgb}{0.0, 0.5, 0.0}



\newcommand{\R}{\mathbb{R}}

\newcommand{\Z}{\mathbb{Z}}






\newcommand{\yd}{y^{\delta}}





\newcommand{\m}{\text{m}}


\newcommand{\Rn}{{\R^n}}

\newcommand{\F}{\mathcal{F}}
\newcommand{\FI}{\mathcal{F}^{-1}}



\newcommand{\LtO}{{L^2(\Omega)}}

\newcommand{\LtRn}{{L^2(\R^n)}}


\newcommand{\HsO}{{H^s(\Omega)}}

\newcommand{\HsRn}{{H^s(\R^n)}}


\newcommand{\E}{{\mathcal{E}}}

\newcommand{\Js}{\mathcal{J}^s}
\newcommand{\Jms}{\mathcal{J}^{-s}}

\newcommand{\SRn}{\mathcal{S}(\Rn)}
\newcommand{\SsRn}{\mathcal{S}^*(\Rn)}

\newcommand{\HmsRn}{{H^{-s}(\Rn)}}
\newcommand{\Es}{\mathcal{E}_s}
\newcommand{\Esh}{\mathcal{E}_{s/2}}

\newcommand{\Ja}{J_{\alpha}}

\newcommand{\fD}{f^{\dagger}}
\newcommand{\fd}{f^{\delta}}
\newcommand{\fad}{f_\alpha^\delta}

\newcommand{\ga}{q_\alpha}
\newcommand{\ra}{r_\alpha}
\newcommand{\JTV}{J_{TV}}
\newcommand{\Rt}{\tilde{R}}
\newcommand{\gd}{g^{\delta}}
\newcommand{\fdTV}{\tilde{f}_{TV}^{\delta}}
\newcommand{\ft}{\Tilde{f}}
\newcommand{\ftdTV}{\tilde{f}^{\delta}_{TV}}
\newcommand{\LtSR}{L^2(\mathbb{S}\times\R)}
\newcommand{\LtRt}{L^2(\R^2)}
\newcommand{\fa}{f_{\alpha}}
\renewcommand{\fam}{f_{\alpha,m}}
\newcommand{\fak}{f_{\alpha,k}}
\newcommand{\mfz}{\mathbf{0}}
\newcommand{\Is}{\mathcal{I}_{s}}
\newcommand{\LtSnmR}{L^2(\mathbb{S}^{n-1}\times\R)}
\renewcommand{\S}{\mathcal{S}}



\title{Regularization of linear inverse problems with irregular noise using embedding operators }
\author{
Xinyan Li\footnote{School of Mathematical Sciences, Fudan University, Shanghai 200433, China, (18110180019@fudan.edu.cn)} ,
Simon Hubmer\footnote{Johann Radon Institute Linz, Altenbergerstra{\ss}e 69, A-4040 Linz, Austria, (simon.hubmer@ricam.oeaw.ac.at)} ,
Shuai Lu\footnote{Corresponding author. School of Mathematical Sciences, Fudan University, Shanghai 200433, China, (slu@fudan.edu.cn)} ,
Ronny Ramlau\footnote{Johannes Kepler University Linz, Institute of Industrial Mathematics, Altenbergerstra{\ss}e 69, A-4040 Linz, Austria, (ronny.ramlau@jku.at)} \footnote{Johann Radon Institute Linz, Altenbergerstra{\ss}e 69, A-4040 Linz, Austria, (ronny.ramlau@ricam.oeaw.ac.at)}
}

\begin{document}
\maketitle

\begin{abstract}
In this paper, we investigate regularization of linear inverse problems with irregular noise. In particular, we consider the case that the noise can be preprocessed by certain adjoint embedding operators. By introducing the consequent preprocessed problem, we provide convergence analysis for general regularization schemes under standard assumptions. Furthermore, for a special case of Tikhonov regularization in Computerized Tomography, we show that our approach leads to a novel (Fourier-based) filtered backprojection algorithm. Numerical examples with different parameter choice rules verify the efficiency of our proposed algorithm.
\end{abstract}

\noindent \textbf{Keywords.} Linear Inverse Problems, Embedding Operators, Irregular Noise, Regularization Theory, Computerized Tomography


\section{Introduction}\label{se1}

In this paper, we consider a general setting of linear inverse problem of the form
    \begin{equation}\label{eq_linearIP}
        \yd = A \fD + \delta \zeta \,,
    \end{equation}
where $A: X \to Y$ is a bounded linear operator acting between two Hilbert spaces $X$, $Y$, the element $\fD \in X$ is the unknown exact solution, and $\yd$ is the (noisy) measurement.
Furthermore, the non-negative constant $\delta$ in (\ref{eq_linearIP}) denotes the noise level, and $\zeta$ is the noise satisfying $\zeta\notin Y$, which is assumed to be preprocessed by certain embedding operators as specified below. These linear inverse problems arise frequently in imaging sciences including the imaging denoising, deblurring, deconvolution and several tomographic applications. 

Embedding operators play an important role in inverse problems when one aims to reconstruct some unknown function from (in)direct noisy measurement. As an illustration, consider the imaging denoising problem in the framework of \cite{Ramlau_Teschke_2004_1,Ramlau_Teschke_2004_2}, where the forward operator $A$ in \eqref{eq_linearIP} is chosen as the identity operator, and the unknown solution belongs to a Sobolev space, i.e.,  $\fD \in H^s(\R)$ for some $s > 0$. The noise $\zeta$ is assumed to be an element of $L^2(\R)$, and the Sobolev norm of an un-regularized reconstruction may deviate from that of the true one. To obtain an improved reconstruction, the Sobolev embedding operator $E_s: H^s(\R) \rightarrow L^2(\R)$ is introduced, and the original inverse problem is changed to
    \begin{equation*}
        f \in H^s(\R) \mapsto E_s f  + \delta \zeta = \fd \in L^2(\R)  \,.
    \end{equation*}
In order to stabilize the above denoising problem, the following Tikhonov regularization method incorporating the embedding operator $E_s$ was considered in \cite{Ramlau_Teschke_2004_1}
    \begin{equation*}
        \Ja(x) := \norm{\fd - E_s f }_{L^2(\R)}^2 + \alpha \norm{ f }_{H^s(\R)}^2 \,,
    \end{equation*}
where $\alpha$ is a regularization parameter. The minimizer $\fad$ of this functional is given by
    \begin{equation*}
        \fad = \kl{E_s^* E_s + \alpha I }^{-1} E_s^* \fd \,,
    \end{equation*}
from which explicit expression can be derived via Fourier analysis.

Besides Tikhonov regularization, many other regularization methods such as the truncated SVD method or Landweber iteration are available for solving linear inverse problems of the form \eqref{eq_linearIP}. Mathematically, these methods can be analysed in the following common framework \cite{Engl_Hanke_Neubauer_1996,Lu_Pereverzev_2013}. Let $\ga(\lambda)$ be a generic regularization function and $\ra(\lambda):= 1-\lambda \ga(\lambda)$ be a reconstruction error function. Then we define the regularized solution $\fad$ of the noisy measurement by
	\begin{equation*}
		\fad = \ga(A^*A) A^* \yd \,.
	\end{equation*}
The total error $\fD-\fad$ then can be expressed by
    \begin{equation*}
        \fD - \fad  = \ra(A^* A) \fD - \delta \ga(A^*A) A^* \zeta \,.
    \end{equation*}
Different regularization schemes are classified via varies forms of regularization and reconstruction error functions $\ga(\lambda)$ and $\ra(\lambda)$, which typically satisfy the following definition.

\begin{definition}\cite{Mathe_Pereverzev_2003}\label{defn_regularization}
A family of functions $\ga(\lambda)$ is called a regularization if it satisfies
    \begin{equation}\label{eq_regularization}
    \begin{split}
        & \sup_{0<\lambda\leq a } \abs{ 1-\lambda \ga(\lambda) } \leq \gamma \,,
        \qquad
        0 < \alpha \leq a \,,
        \\
        & \sup_{0<\lambda\leq a } \abs{ \ga(\lambda) } \leq \frac{\gamma_*}{\alpha} \,,
        \qquad
        0 < \alpha \leq a \,,
    \end{split}
    \end{equation}
with two finite constants $\gamma$, $\gamma_*$ and $a := \norm{A^*A}$. Moreover, the regularization function $\ga(\lambda)$ is said to have a qualification $\rho:(0,a)\rightarrow \R_+$ if the following inequality holds
    \begin{equation}\label{eq_regularization_qualification}
        \sup_{0<\lambda\leq a } \abs{ 1-\lambda \ga(\lambda)} \rho(\lambda) \leq \gamma \rho(\alpha) \,,
        \qquad
        0 < \alpha \leq a \,.
    \end{equation}
\end{definition}

\begin{example}\label{ex:regularization}
We present two classic regularization methods below:
\begin{itemize}
    \item \textbf{Tikhonov regularization} The regularization and reconstruction error functions for Tikhonov regularization are $\ga(\lambda):= \frac{1}{\lambda+\alpha}$ and $\ra(\lambda):= \frac{\alpha}{\lambda+\alpha}$, respectively, and the qualification is $\rho(\lambda):= \lambda$.
  \item \textbf{Truncated singular value decomposition (TSVD)} The regularization and reconstruction error functions for TSVD are
  \begin{align*}
  \ga(\lambda):=\left\{
  \begin{array}{cc}
    \lambda^{-1} \,, & \lambda \geq \alpha \,,  \\
    0 \,, & \lambda < \alpha \,,
  \end{array}
  \right. \quad {\textrm and}\quad
    \ra(\lambda):=\left\{
  \begin{array}{cc}
    0 \,, & \lambda \geq \alpha \,, \\
    1 \,, & \lambda < \alpha \,,
  \end{array}
  \right.
  \end{align*}
  respectively, and the qualification is $\rho(\lambda):= \lambda^n$ for arbitrary $n\in \mathbb{N}$.
\end{itemize}
For other regularization methods, these functions can be found e.g.\ in \cite{Engl_Hanke_Neubauer_1996,Lu_Pereverzev_2013}.
\end{example}

In this work, we are particularly interested in the case that the measurement noise does not belong to the space $Y$, but only to some more general space $Z$. For example, in a classical Sobolev setting one often assumes that the noise does not belong to $Y = \LtRn$ but to a negative order Sobolev space $Z = \HmsRn$ for some $s > 0$. In the following, we call this irregular noise. This is motivated by the observation that, e.g., white noise does not belong to the square integrable function space but to some negative Sobolev space almost surely \cite[Example 1]{Kekkonen_Lassas_Siltanen_2014}. More precisely, we will propose to preprocess the noisy measurements by using appropriate embedding operators, and discuss the solvability of the resulting inverse problem \eqref{eq_linearIP}. In order to reconstruct the unknown function $\fD$ from noisy measurements $\yd$ with irregular noise $\zeta$, a conventional approach is to assume that the variable $Af$ is sufficiently smooth for all $f \in X$ such that the inner product $\spr{ Af, \yd}$ is well-defined. In this case, one can replace the conventional residual $\norm{Af-\yd}_Y^2$ by $\norm{ Af }_Y^2 - 2\spr{ Af, \yd}_Y$ and reformulate the classic Tikhonov functional into
    \begin{equation*}
        \Ja(f) := \norm{Af}_Y^2- 2\spr{ Af, \yd}_Y + \alpha \norm{f}_X^2 \,.
    \end{equation*}
The above approach has been established and analysed for both Poissonian and Gaussian noise in \cite{Hohage_Werner_2013,Kekkonen_Lassas_Siltanen_2014}, respectively. Another approach to solve the inverse problem \eqref{eq_linearIP} with irregular noise is to preprocess the noisy measurement; see e.g.\ \cite{MR2855055,MR2772535,Blanchard_Mathe_2012,Lu_Mathe_2014}. For instance, by setting $H=A A^*$ and choosing an index $\mu >0$, one can assume that $H^{\mu}(\yd - Af) \in Y$ for $f \in X$ almost surely, which follows from Sazonov's theorem \cite{Sazonov_1958} if $H^{\mu}$ is a Hilbert-Schmidt operator. Here $H^{\mu}$ can be considered as an embedding or smoothing operator to weaken the influence of irregular noise. For example, choosing $\mu=1/2$ as in \cite{Lu_Mathe_2014} results in the original inverse problem \eqref{eq_linearIP} being replaced by a preprocessed (symmetrized) normal equation
    \begin{equation}\label{eq_linear_newIP}
        T f = g^{\delta} \,
    \end{equation}
by defining $T= A^* A$ and $g^{\delta} = A^* \yd$. Even though a straightforward implementation of different regularization schemes based on the symmetrized equation \eqref{eq_linear_newIP} is possible, when the noise is Gaussian, additional treatment concerning the choice of the regularization parameter is necessary in order to weaken the randomness of the noise. In particular, several modified discrepancy-based parameter choice rules with emergency stops, taking random noise with large deviation into account, have been proposed in \cite{Blanchard_Mathe_2012,Lu_Mathe_2014} to derive (order-optimal) error bounds when the unknown solution has some appropriate spectral resolution. For further treatments of inverse problems with random noise, we refer to the review paper \cite{Cavalier_2011} and the references therein.

At the same time, the combination of data preprocessing followed by an inversion algorithm has extensive applications. For example, a prominent reconstruction algorithms for computerized tomography (CT) is the {\it Filtered Backprojection,} which consists of a data preprocessing step in the Fourier space (usually some sort of frequency cutoff) followed by an application of the inverse operator, see e.g.~\cite{smith1985image,smith1985mathematical,shepp1974fourier,chang1980scientific}.
Similar approaches for general operator equations have have been investigated in \cite{Klann_Maass_Ramlau_2006,Klann_Ramlau_2008}. In this paper, we focus on the preprocessing approach discussed above, and consider regularization schemes for (\ref{eq_linear_newIP}) when the preprocessed noise is bounded in the Hilbert space $Y$. However, instead of preprocessing the noisy measurements by $H^{\mu} = (A^*A)^\mu$, we propose to use generic adjoint embedding operators associated with the forward operator. These operators can be well adapted to several applications, and we will particularly investigate their realization in CT with a novel filtered backprojection algorithm.

The outline of this paper is as follows: In Section~\ref{sect_background}, we summarize the definition and some properties of (negative) Sobolev spaces and their corresponding embedding operators. In Section~\ref{sect_error_bound}, we analyse the regularization properties of general regularization methods with preprocessing embedding operators. In Section~\ref{sect_Tikhonov}, we discuss an explicit application of our proposed approach in CT, and propose a Fourier-based Tikhonov regularization method to treat irregular noise, which yields a novel filtered backprojection algorithm. Finally, in Section~\ref{section_numerics}, we provide several numerical examples verifying the efficiency of our proposed algorithm in conjunction with different deterministic and heuristic parameter choice rules.

\section{Background on Sobolev embedding operators}\label{sect_background}

In this section, we review some background on real order Sobolev spaces and the (adjoint) Sobolev embedding operators, closely following \cite{Hubmer_Sherina_Ramlau_2023} and the works \cite{Adams_Fournier_2003,McLean_2000}. These Sobolev embedding operators can be good candidates to preprocessen the original forward problem (\ref{eq_linearIP}).

\subsection{Real order Sobolev spaces}\label{subsect_Sobolev}

First, we recall the definition of real order Sobolev spaces $\HsRn$ for arbitrary $s \in \R$; cf.~\cite{McLean_2000,Adams_Fournier_2003}. For this, let $\F : \SRn \to \SRn$ be the Fourier transform defined by
    \begin{equation}\label{def_F}
	   (\F u)(\xi) := \int_\Rn  u(x)  e^{-i x \cdot \xi} \, dx \,,
	   \qquad
	   \forall \, \xi \in \Rn \,,
    \end{equation}
where $\SRn$ denotes the Schwartz space of rapidly decreasing functions defined by
	\begin{equation*}
		\SRn = \Kl{ \phi \in C^\infty(\Rn) \,\, \vert \,\, \text{sup}_{x\in\Rn} \abs{ x^\alpha  \partial^\beta \phi(x)} < \infty \,,
        \forall \,\, \text{multi-indices} \, \,\alpha , \beta} \,.
	\end{equation*}
Note that the Fourier transform $\F$ and its inverse $\FI$ can be extended to
	\begin{equation*}
		\F : \SsRn \to \SsRn \,,
		\qquad
		\FI : \SsRn \to \SsRn \,,
	\end{equation*}
where $\SsRn$, the dual space of $\SRn$, is called the space of temperate distributions. Next, consider the continuous, linear Bessel potential operator $\Js : \SRn \to \SRn$ of order $s$ defined by
	\begin{equation*}
		\Js u(x) := (2\pi)^{-n}\int_\Rn (1 + \abs{\xi}^2)^{s/2} \F(u)(\xi) e^{i x \cdot \xi} \, d \xi \,,
		\qquad
		\forall \, x \in \Rn \,.
	\end{equation*}
The Bessel potential operator $\Js$ is selfadjoint w.r.t.\ the $\LtRn$ inner product, i.e.,
	\begin{equation*}
		\spr{\Js u , v}_\LtRn = \spr{u,\Js v}_\LtRn \,,
		\qquad
		\forall \, u ,v \in \SRn \,,
	\end{equation*}
which allows to naturally extend it to a linear operator $\Js : \SsRn \to \SsRn$. Furthermore, from its definition one sees that the Bessel potential operator satisfies
	\begin{equation*}
		\F\kl{ \Js u}(\xi) = (1 + \abs{\xi}^2)^{s/2} \F(u)(\xi) \,,
	\end{equation*}
from which it follows that its application amounts to a multiplication with $(1 + \abs{\xi}^2)^{s/2}$ in the Fourier domain, and thus $\Js$ can be seen as a kind of differential operator. With this, we can now define the fractional order Sobolev spaces $\HsRn$ of order $s \in \R$ by
	\begin{equation*}
		\HsRn := \Kl{ u \in \SsRn \, \vert \, \Js u \in \LtRn} \,,
	\end{equation*}
and equip this space with the inner product and induced norm
	\begin{equation}\label{def_norm_HsRN}
		\spr{u,v}_\HsRn := \spr{\Js u , \Js v}_\LtRn \,,
        \qquad
        \text{and}
        \qquad
        \norm{u}_\HsRn = \sqrt{\spr{u,u}_\HsRn}  \,,
	\end{equation}
respectively. Note that by Plancherel's theorem it follows that
	\begin{equation*}
		\norm{u}_\HsRn^2 = (2\pi)^{-2n}\int_\Rn (1 + \abs{\xi}^2)^{s} \abs{\F(u)(\xi)}^2 \, d \xi \,,
	\end{equation*}
and analogously for the inner product. Note that the definition of real-order Sobolev spaces $\HsO$ over non-empty, open sets $\Omega \subset \Rn$ is somewhat more involved; cf.~\cite{McLean_2000,Adams_1970}.

\subsection{Sobolev embedding operators}\label{subsect_embd}

Next, we consider embedding operators between the Sobolev spaces $\HsRn$ such as
    \begin{equation*}
        E_s : \HsRn \to \LtRn \,,
        \qquad
        u \mapsto E_s u := u \,.
    \end{equation*}
The properties of these embeddings and their generalization to bounded domains can e.g.\ be found in \cite{Adams_Fournier_2003}. For a collection of representations of their adjoint operators $E_s^*$ see \cite{Hubmer_Sherina_Ramlau_2023}. In this paper, we consider the related but slightly different embedding operators
	\begin{equation*}
		\Es : \LtRn \to \HmsRn \,, \quad u \mapsto \Es u := u \,.
	\end{equation*}
Since $\LtRn \subseteq \HmsRn$ for all $s \geq 0$ this operator is well-defined and bounded. Hence, it has a well-defined and bounded adjoint $\Es^* : \HmsRn \to \LtRn$, characterized by
	\begin{equation*}
		\spr{\Es^* u, v}_\LtRn  = \spr{u,v}_\HmsRn \,,
		\qquad
		\forall \, u \in \HmsRn \,, v \in \LtRn \,.
	\end{equation*}
From this characterization, we find with the definition \eqref{def_norm_HsRN} of the inner product that
	\begin{equation}\label{dual_E}
		\Es^* u = \FI\kl{ (1 + \abs{\cdot}^2)^{-s} \F(u)(\cdot) } = \mathcal{J}^{-2s} u \,,
		\qquad
		\forall \, u \in \HmsRn \,.
	\end{equation}
For further considerations, note that by the definition of the norm on $\HsRn$ we have
	\begin{equation*}
		\norm{u}_\HmsRn = \norm{\Jms u}_\LtRn  = \norm{\Esh^* u}_\LtRn \,.
	\end{equation*}


\section{Error bound analysis for general regularization schemes}\label{sect_error_bound}

In this section, we recall the generic linear inverse problem \eqref{eq_linearIP}, i.e.,
    \begin{equation*}
        \yd = A \fD + \delta \zeta \,.
    \end{equation*}
As noted above, we are interested in the case that the noise is irregular, i.e., $\zeta \notin Y$ but $\zeta \in Z$ for some larger Hilbert space $Z \supset Y$. This may for example be a negative order Sobolev spaces as chosen in the next section, but the subsequent analysis is not restricted to this specific choice. Following our discussion in Section \ref{se1}, we assume that the irregular noise $\zeta$ can be preprocessed and the following assumption holds.
\begin{assumption}\label{ass_noise}
There exists a bounded linear embedding operator $\E : Y \to Z, y \mapsto y$ such that $\eta := \E^* \zeta \in Y$ satisfies $\norm{\eta}_Y \leq 1$. 
\end{assumption}

Using the adjoint embedding operator $\E^*$, we now define
    \begin{equation*}
        T := \E^* A \,,
        \qquad
        g := \E^* y \,,
        \qquad
        \gd := \E^* \yd \,,
    \end{equation*}
and, instead of the original inverse problem \eqref{eq_linearIP}, consider the preprocessed inverse problem \eqref{eq_linear_newIP}, i.e.,
    \begin{equation*}
        T f = \gd = g + \delta\E^*\zeta \,.
    \end{equation*}

\begin{example}\label{ex:ct}
A prototypical example for (\ref{eq_linear_newIP}) is given by the Radon transform \cite{Natterer_1985}; see in particular \cite{Mathe_2019}. Let $\Omega\subset \Rn$ be the unit ball in $\Rn$ and let $f \in \LtO$. Then for $\theta \in \mathbb{S}^{n-1}$ and $\kappa \in \R$ the Radon transform is defined by
    \begin{align*}
		  Rf(\theta,\kappa) :=\int_{x\cdot\theta=\kappa}f(x)dx=\int_{\theta^{\bot}}f(\kappa\theta+y)dy \,,
	\end{align*}
and the inverse Radon problem consists of solving the operator equation $y = Rf$. The Radon transform is typically considered as a linear operator $R: \LtO \to \LtSnmR$. Furthermore, in practice one is normally faced with noisy data of the form
	\begin{align*}
		y^{\delta} = Rf + \delta \zeta.
	\end{align*}
In some cases, the noise $\zeta$ may not belong to $\LtSnmR$ but only to some negative order Sobolev space. Hence, it follows that $y^{\delta} \notin \LtSnmR$, and thus the problem has to be preprocessed for example by Sobolev embedding operators as discussed above. We will revisit this particular example in detail in Section~\ref{sect_Tikhonov}.
\end{example}

To further regularize the above preprocessed forward problem, we can define the regularized solutions
    \begin{equation}\label{eq_generalminimizer}
        \fad := \ga(T^*T) T^* \gd \,,
    \end{equation}
where $\ga$ is a general regularization function; cf.~Definition~\ref{defn_regularization}. The aim of this section is to establish general error bounds for the regularized solutions \eqref{eq_generalminimizer}. For this, note first that one can decompose
    \begin{equation}\label{eq_mainerror}
        \fD-\fad = \kl{ I-\ga(T^*T)T^*T } \fD - \ga(T^*T)T^*\delta \eta \,.
    \end{equation}
Hence, in order to estimate $\norm{ \fD-\fad}_{X}$ we now establish bounds for both terms in the above equality. To this end, we need to impose certain smoothness assumptions on the unknown solution $\fD$ in the form of \emph{general source conditions} c.f. \cite{Lu_Pereverzev_2013,Mathe_Pereverzev_2003,Mathe_2019}, for which we make the following definition.

\begin{definition}\label{defn_indexfun}\cite{Lu_Pereverzev_2013,Mathe_Pereverzev_2003,Mathe_2019}
The function $\varphi:[0,\infty)\to[0,\infty)$ is called an \emph{index function} if it is non-decreasing, continuous, positive on $(0,\infty)$, and satisfies $\varphi(0)=0$. Given two index functions $\varphi_1$, $\varphi_2$, we define the partial ordering $\varphi_1 \prec \varphi_2$ (read $\varphi_1$ is beyond $\varphi_2$) if the function $t\rightarrow \varphi_1(t)/\varphi_2(t)$ is an index function, i.e., $\varphi_1$ tends to zero faster than $\varphi_2$.
\end{definition}

Next, we introduce the following standard smoothness assumption.

\begin{assumption}\label{ass_source-set}
 There exists an index function $\varphi$ such that there holds
    \begin{equation}\label{eq:source-set}
        \fD \in \mathcal{A}_{\varphi} = \Kl{f \in X \,\vert\, \exists \,  w \in X \,, \norm{w}_X \leq 1 : \quad f = \varphi(A^*A) w \,  } \,.
    \end{equation}
\end{assumption}

Furthermore, we require the following link condition between $A A^*$ and $\E \E^*$:

\begin{assumption}\label{ass_linkcond}
There exist an index function $\psi$ and constants $0< m \leq 1 \leq M < \infty$ such that
    \begin{equation*}
        m \norm{\psi(A A^* ) v }_Y \leq  \norm{ (\E \E^* )^{1/2} v}_Y \leq M \norm{\psi(A A^* ) v }_Y\,,
        \qquad
        \forall \, v \in Y \,.
    \end{equation*}
\end{assumption}

\begin{example}\label{ex_linkcondition}
We give an example of the above assumption. Let $A$ be self-adjoint and assume that the operators $\E \E^*$ and $A A^*$ obey
	\begin{itemize}
	    \item for some $a>0$, we have the eigenvalues $s_j(\E \E^*) \asymp j^{-(1+2a)}$ for $j=1,2,\dots$,
    	\item for some $p>0$, we either have the eigenvalues $s_j(A A^*) \asymp j^{-2p}$, or $s_j(A) \asymp j^{-p}$, for $j=1,2,\ldots$ .
	\end{itemize}
In this setting, we obtain the index function $\psi(t) = t^{\frac{1+2a}{4p}}$ in Assumption \ref{ass_linkcond}.
\end{example}

Assumption~\ref{ass_linkcond} can also be used to establish a further linking condition between $A^*A$ and the modified forward operator $T^* T$, as we see in

\begin{lemma}
Let Assumption~\ref{ass_linkcond} hold and denote $\Theta(\lambda) := \sqrt{\lambda}\psi(\lambda)$. Then there holds
    \begin{equation}\label{eq_linkcond}
        m\norm{\Theta(A^* A) w }_X \leq \norm{ (T^* T)^{1/2} w}_X \leq M\norm{\Theta(A^* A) w }_X\,,
        \qquad
        \forall \, w\in X \,.
    \end{equation}
\end{lemma}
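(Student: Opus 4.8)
The plan is to reduce the two-sided bound \eqref{eq_linkcond} to the given link condition (Assumption~\ref{ass_linkcond}) applied to the vector $v = Aw$, and then to transport the resulting estimate from the data side back to the solution side via the intertwining of $A$ and $A^*$. First I would rewrite the middle quantity purely in terms of the embedding. Since $T = \E^* A$, for every $w \in X$ the functional calculus gives
\[
    \norm{(T^* T)^{1/2} w}_X^2 = \spr{T^* T w, w}_X = \norm{T w}_Y^2 = \norm{\E^* A w}_Y^2 .
\]
Using the polar-type identity $\norm{\E^* v}_Y = \norm{(\E \E^*)^{1/2} v}$ (with the right-hand side understood exactly as in Assumption~\ref{ass_linkcond}), this shows
\[
    \norm{(T^* T)^{1/2} w}_X = \norm{(\E \E^*)^{1/2} A w} ,
\]
so that the middle term of \eqref{eq_linkcond} is literally the middle term of the link condition evaluated at $v = A w \in Y$.

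Applying Assumption~\ref{ass_linkcond} with $v = Aw$ then immediately yields
\[
    m \norm{\psi(A A^*) A w}_Y \leq \norm{(T^* T)^{1/2} w}_X \leq M \norm{\psi(A A^*) A w}_Y ,
\]
and it only remains to identify both outer terms with $\norm{\Theta(A^* A) w}_X$. For this I would invoke the intertwining relation $\psi(A A^*) A = A \psi(A^* A)$, which is immediate for monomials from $(A A^*)^n A = A (A^* A)^n$, extends to polynomials by linearity, and to the continuous index function $\psi$ by uniform approximation on the spectrum $[0, \norm{A^* A}]$. With it,
\[
    \norm{\psi(A A^*) A w}_Y^2 = \norm{A \psi(A^* A) w}_Y^2 = \spr{A^* A\, \psi(A^* A)^2 w, w}_X = \norm{\sqrt{A^* A}\, \psi(A^* A) w}_X^2 ,
\]
where the last step uses that $A^* A$ and $\psi(A^* A)$ commute. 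Since $\Theta(\lambda) = \sqrt{\lambda}\, \psi(\lambda)$ gives $\Theta(A^* A) = \sqrt{A^* A}\, \psi(A^* A)$, the right-hand side is exactly $\norm{\Theta(A^* A) w}_X^2$, and substituting back produces \eqref{eq_linkcond}.

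I expect the only genuinely delicate point to be the bookkeeping in the first step: making the identity $\norm{\E^* A w}_Y = \norm{(\E \E^*)^{1/2} A w}$ precise, since $\E \E^*$ naturally acts on the larger space $Z \supseteq Y$ and one must verify that the quantity produced here coincides with the one appearing in Assumption~\ref{ass_linkcond}. The remaining ingredients---the reduction $\norm{(T^*T)^{1/2}w}_X = \norm{Tw}_Y$ and the intertwining $\psi(AA^*)A = A\psi(A^*A)$---are standard facts of the functional calculus; the only mild care there is justifying the passage from polynomials to an arbitrary continuous $\psi$, which follows from the continuity of the functional calculus on the compact spectrum of $A^*A$.
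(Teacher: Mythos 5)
Your proposal is correct and takes essentially the same route as the paper's own proof: apply Assumption~\ref{ass_linkcond} to $v = Aw$, identify the middle term with $\norm{(T^*T)^{1/2}w}_X$ via $T = \E^*A$, and convert the outer terms using the intertwining $\psi(AA^*)A = A\psi(A^*A)$ --- the paper does exactly this after squaring, working with inner products (writing $\spr{T^*Tw,w}_X = \spr{A^*\E\E^*Aw,w}_X$) and folding the intertwining into an appeal to selfadjointness. The bookkeeping issue you flag, namely that $\E\E^*$ lives on $Z$ so one must check that $\norm{\E^*Aw}_Y$ coincides with the quantity $\norm{(\E\E^*)^{1/2}Aw}_Y$ appearing in Assumption~\ref{ass_linkcond}, is glossed over in the paper as well, so your version is if anything the more explicit one.
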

\begin{proof}
Let $w \in X$ and denote $v := Aw$. Then due to Assumption~\ref{ass_linkcond} there exists an index function $\psi$ and constants $0< m \leq 1 \leq M < \infty$ such that
    \begin{equation*}
        m \norm{\psi(A A^* ) v }_Y \leq  \norm{ (\E \E^* )^{1/2} v}_Y \leq M \norm{\psi(A A^* ) v }_Y\,,
    \end{equation*}
which we can square to obtain the inequalities
    \begin{equation*}
    \begin{split}
        m^2 \spr{ \psi(A A^* ) A w, \psi(A A^* ) A w }_Y
        &\leq
        \spr{ (\E \E^*)^{1/2} A w , (\E \E^*)^{1/2} A w}_Y
        \\
        &\leq M^2 \spr{ \psi(A A^* ) A w, \psi(A A^* ) A w }_Y \,.
    \end{split}
    \end{equation*}
Since both $\psi(A A^*)$ and $(\E \E^*)$ are selfadjoint, we can rewrite this into
    \begin{equation*}
        m^2 \spr{ \psi^2(A^* A ) A^* A w, w }_X
        \leq
        \spr{ A^* \E \E^* A w ,  w}_X
        \leq
        M^2 \spr{ \psi^2(A^* A ) A^* A w, w}_X \,.
    \end{equation*}
Finally, by definition we have $T := \E^* A$ it follows that
    \begin{equation*}
        m^2 \spr{ \psi^2(A^* A ) A^* A w, w }_X
        \leq
        \spr{ T^*T w, w }_X
        \leq
        M^2 \spr{ \psi^2(A^* A ) A^* A w, w}_X \,,
    \end{equation*}
which together with the definition of $\Theta$ yields the assertion.
\end{proof}

From the properties of index functions it follows that $\Theta^2(\lambda)$ is strictly monotone and increases superlinearly. Next, we follow \cite{Mathe_2019} and define the related function
    \begin{equation}\label{eq_relatedfun}
        f_0(t) := \kl{(\Theta^2)^{-1} (t)}^{1/2} \,,
        \qquad
        \forall \, t > 0 \,,
    \end{equation}
and assume that it is such that $f_0^2$ is an operator concave function. Furthermore, in order to take into account the qualification of the regularization method, cf.~Definition~\ref{defn_regularization}, we need a stronger lifting condition than the one in \eqref{eq_linkcond}, and thus make

\begin{assumption}[\cite{Mathe_2019}, Lifting condition]\label{ass_liftingcond}
There exists a lifting index $\mu>1$ and some constants $0< m \leq 1 \leq M < \infty$ such that
    \begin{equation*}
        m^{\mu}\norm{\Theta^{\mu}(A^* A) w }_X \leq \norm{ (T^* T)^{\mu/2} w}_X \leq M^{\mu} \norm{\Theta^{\mu}(A^* A) w }_X\,,
        \qquad
        \forall \, w \in X \,.
    \end{equation*}
Furthermore, the function $f_0^2$ with $f_0$ as in \eqref{eq_relatedfun} is an operator concave function.
\end{assumption}

Next, we present our main error bounds for general regularization schemes \eqref{eq_generalminimizer} following the arguments in \cite{Mathe_2019,MR2772535}.

\begin{theorem}\label{thm_generalerror}
Let $\varphi$ be an index function satisfying Definition \ref{defn_indexfun}, $\ga(\lambda)$ be a regularization function satisfying Definition~\ref{defn_regularization} with qualification $\rho$ as in \eqref{eq_regularization_qualification} and Assumptions \ref{ass_noise}-\ref{ass_linkcond} hold.
\begin{enumerate}
    \item If $1 \prec \varphi \prec \Theta$ and the function $\lambda \mapsto \varphi(f_0^2(\lambda))/\rho(\lambda)$ is non-increasing, then the error between the unknown true solution $\fD$ and the regularization minimizer $\fad$ in \eqref{eq_generalminimizer} can be bounded by
        \begin{equation*}
            \norm{\fD-\fad}_{X}\leq C \kl{\varphi(f_0^2 (\alpha)) + \frac{\delta}{\sqrt{\alpha}}} \,,
        \end{equation*}
where the constant $C$ depends on the regularization schemes and the linking condition.
    \item If $\Theta \prec \varphi \prec \Theta^\mu$, the function $\lambda \mapsto \varphi(f_0^2(\lambda))/\rho(\lambda)$ is non-increasing, and Assumption~\ref{ass_liftingcond} holds with a sufficiently large $\mu$, then the error between the unknown true solution $\fD$ and the regularization minimizer $\fad$ in \eqref{eq_generalminimizer} can be bounded by
        \begin{equation*}
            \norm{\fD-\fad}_{X} \leq C \kl{\varphi(f_0^2 (\alpha)) + \frac{\delta}{\sqrt{\alpha}}} \,,
        \end{equation*}
    where the constant $C$ depends on the regularization schemes and the lifting condition.
\end{enumerate}
\end{theorem}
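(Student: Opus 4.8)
The plan is to work from the decomposition~\eqref{eq_mainerror}, which I write as $\fD-\fad = \ra(T^*T)\fD - \delta\,\ga(T^*T)T^*\eta$, using $I-\ga(T^*T)T^*T=\ra(T^*T)$ and $\eta=\E^*\zeta$ from Assumption~\ref{ass_noise}. I would estimate the propagated-noise term and the bias term separately and then add the resulting bounds $C\delta/\sqrt{\alpha}$ and $C\varphi(f_0^2(\alpha))$. The noise term is routine: by the spectral calculus for the self-adjoint operator $T^*T$ one has $\norm{\ga(T^*T)T^*}=\sup_{0<\lambda\le a}\sqrt{\lambda}\,\abs{\ga(\lambda)}$, and writing $\sqrt{\lambda}\,\abs{\ga(\lambda)}=\sqrt{\lambda\abs{\ga(\lambda)}}\,\sqrt{\abs{\ga(\lambda)}}$ together with the two bounds in \eqref{eq_regularization} gives $\sqrt{\lambda}\,\abs{\ga(\lambda)}\le\sqrt{(1+\gamma)\gamma_*}/\sqrt{\alpha}$; since $\norm{\eta}_Y\le1$, the noise term is at most $C\delta/\sqrt{\alpha}$.

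The bias term is the heart of the argument. Inserting the source condition $\fD=\varphi(A^*A)w$ with $\norm{w}_X\le1$ from Assumption~\ref{ass_source-set}, I must estimate $\norm{\ra(T^*T)\varphi(A^*A)w}_X$, in which $\ra$ is a function of $T^*T$ while the smoothness is carried by the \emph{non-commuting} operator $A^*A$. The strategy is to transfer the source condition to the $T^*T$-scale. Squaring the link condition \eqref{eq_linkcond} yields the L\"owner-order inequalities $m^2\Theta^2(A^*A)\le T^*T\le M^2\Theta^2(A^*A)$; since $f_0^2=(\Theta^2)^{-1}$ is operator concave (hence operator monotone), applying $f_0^2$ and absorbing the constants $m,M$ through its homogeneity produces a comparison of the form $c_1\,A^*A\le f_0^2(T^*T)\le c_2\,A^*A$. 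This lets me replace the smoothness $\varphi(A^*A)$ by the transferred index function $\varphi(f_0^2(\cdot))$ evaluated at $T^*T$, reducing the bias to a bound of the type $C\,\norm{w}_X\sup_{0<\lambda\le a}\abs{\ra(\lambda)}\,\varphi(f_0^2(\lambda))$. In the regime $1\prec\varphi\prec\Theta$ (Case~1) the first-power link \eqref{eq_linkcond} suffices for this transfer, whereas in the regime $\Theta\prec\varphi\prec\Theta^\mu$ (Case~2) I would instead invoke the lifting condition of Assumption~\ref{ass_liftingcond}, choosing the lifting index $\mu$ large enough that $\varphi$ stays within the reach of the lifted link.

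It then remains to estimate $\sup_{0<\lambda\le a}\abs{\ra(\lambda)}\,\varphi(f_0^2(\lambda))$. By hypothesis $\lambda\mapsto\varphi(f_0^2(\lambda))/\rho(\lambda)$ is non-increasing, so for $\lambda\ge\alpha$ we have $\varphi(f_0^2(\lambda))\le(\varphi(f_0^2(\alpha))/\rho(\alpha))\,\rho(\lambda)$, and the qualification inequality \eqref{eq_regularization_qualification} gives $\abs{\ra(\lambda)}\varphi(f_0^2(\lambda))\le\gamma\,\varphi(f_0^2(\alpha))$; for $\lambda\le\alpha$ the monotonicity of $\varphi(f_0^2(\cdot))$ together with $\abs{\ra(\lambda)}\le\gamma$ yields the same bound. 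Hence the bias is at most $C\,\varphi(f_0^2(\alpha))$, and adding the noise estimate proves both assertions. The step I expect to be the main obstacle is the transfer in the previous paragraph: because $A^*A$ and $T^*T$ do not commute, passing from the norm-link \eqref{eq_linkcond} (or its lifted version) to a usable comparison between $\varphi(A^*A)$ and $\varphi(f_0^2(T^*T))$ cannot rely on joint spectral calculus and must instead be carried out through operator-monotonicity (L\"owner--Heinz) and the operator concavity of $f_0^2$, with careful tracking of the constants $m,M$ and, in Case~2, a lifting index $\mu$ large enough to keep $\varphi$ in the admissible range.
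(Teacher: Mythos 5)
Your overall architecture coincides with the paper's: the same decomposition \eqref{eq_mainerror}, the same spectral bound $\norm{\ga(T^*T)T^*\delta\eta}\leq C\delta/\sqrt{\alpha}$, and the same final estimate of $\sup_{0<\lambda\leq a}\abs{\ra(\lambda)}\varphi(f_0^2(\lambda))$ by splitting at $\lambda=\alpha$ and using the qualification together with the monotonicity of $\lambda\mapsto\varphi(f_0^2(\lambda))/\rho(\lambda)$; all of these steps are correct. The genuine gap sits exactly where you predicted trouble, but your proposed fix does not close it. From $m^2\Theta^2(A^*A)\leq T^*T\leq M^2\Theta^2(A^*A)$ you can indeed deduce $m^2 A^*A\leq f_0^2(T^*T)\leq M^2 A^*A$, since $f_0^2$ is non-negative and operator concave with $f_0^2(0)=0$, hence operator monotone, and the constants are absorbed not by ``homogeneity'' (which $f_0^2$ does not have) but by concavity, i.e.\ $f_0^2(ct)\geq c\,f_0^2(t)$ for $c\leq 1$. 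But you cannot then ``apply $\varphi$'' to this two-sided comparison: a general index function is not operator monotone, and L\"owner--Heinz gives nothing unless $\varphi^2$ itself were operator monotone --- an assumption appearing nowhere in the theorem (take $\varphi(t)=t$, $f_0^2(t)=t^{1/2}$: then $\varphi^2\circ f_0^2$ is linear, yet $\varphi^2(t)=t^2$ is not operator monotone, so your two-stage route fails even where the theorem applies). This is precisely why the smoothness ranges $1\prec\varphi\prec\Theta$ and $\Theta\prec\varphi\prec\Theta^\mu$ enter: the paper assumes operator concavity of the \emph{composition} $\varphi^2(f_0^2(t))$ in Case 1, respectively $\varphi^2\kl{(\Theta^{2\mu})^{-1}(t)}$ in Case 2 with $\mu$ sufficiently large, and applies the Math\'e--Tautenhahn interpolation theorem \emph{once}, directly to the link (resp.\ lifting) inequality, yielding $\norm{\varphi(A^*A)w}\leq M\norm{\varphi(f_0^2(T^*T))w}$; in Case 2 one additionally needs the identity $(\Theta^{2\mu})^{-1}(\lambda)=(\Theta^2)^{-1}(\lambda^{1/\mu})$ to return from the lifted expression to $f_0^2(T^*T)$, a point your sketch omits.

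There is a second, smaller omission: even granted the norm inequality $\norm{\varphi(A^*A)w}\leq M\norm{\varphi(f_0^2(T^*T))w}$, you cannot directly reduce the bias to $\sup_{\lambda}\abs{\ra(\lambda)}\varphi(f_0^2(\lambda))$, because $\ra(T^*T)$ does not commute with $\varphi(A^*A)$ and a norm inequality between the two smoothness operators does not permit inserting $\ra(T^*T)$ in front. The paper bridges this with Douglas' range inclusion theorem: the norm inequality implies the factorization $\varphi(A^*A)w=\varphi(f_0^2(T^*T))\hat{w}$ with $\norm{\hat{w}}\leq M$, after which everything is a function of the single operator $T^*T$ and your spectral sup-argument applies verbatim. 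Your phrase ``this lets me replace the smoothness $\varphi(A^*A)$ by the transferred index function evaluated at $T^*T$'' silently presumes this factorization. With the composed-concavity hypothesis fed into the interpolation theorem, and the Douglas factorization made explicit, your sketch becomes the paper's proof; without them, the central transfer step does not go through.
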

\begin{proof}
First of all, by recalling \eqref{eq_mainerror} we bound the total error by
    \begin{equation*}
    \begin{split}
        \norm{\fD-\fad}_{X} & \leq \norm{\fD-\fa} + \norm{\fa -\fad } \\
        & \leq \norm{\kl{ \ga(T^* T) T^* T - I} \fD } + \norm{\ga(T^* T) T^* \delta \eta} \,.
    \end{split}
    \end{equation*}
For the first term on the right side we use the source condition $\fD: = \varphi(A^*A) w$ to obtain
    \begin{equation}\label{eq_bias}
        \norm{\kl{ \ga(T^* T) T^* T - I} \fD }  = \norm{ \ra (T^* T) \varphi(A^*A) w } .
    \end{equation}
Next, we derive bounds for this term depending on the following two cases:

\begin{enumerate}
    \item In the first case, we assume that $1 \prec \varphi \prec \Theta$. Then by noticing $\varphi^2(f_0^2 (t))$ is operator concave, the interpolation theorem from \cite{Mathe_Tautenhahn_2006} yields
        \begin{equation*}
            \norm{\varphi(A^*A) w} \leq M \norm{\varphi(f_0^2 (T^*T)) w}, \qquad  \forall \, w\in X \,.
        \end{equation*}
    Thus, the Douglas' range inclusion theorem \cite{Douglas_1996,Mathe_2019} yields that for any $w$, $\norm{w}\leq 1$ we can find $\hat{w}$, $\norm{\hat{w}}\leq M$ such that $\varphi(A^*A) w = \varphi(f_0^2(T^*T)) \hat{w}$, which yields
        \begin{equation*}
        \begin{split}
            \norm{\kl{ \ga(T^* T) T^* T - I} \fD }^2 & \leq \norm{ \ra (T^* T) \varphi(A^*A) w }^2
            \quad
            \\
            & =\norm{ \ra (T^* T) \varphi(f_0^2(T^*T)) \hat{w} }^2 \\
            & \leq M^2  \norm{ \ra (T^* T) \varphi(f_0^2(T^*T))}^2 \leq C \varphi^2(f_0^2(\alpha)) \,,
        \end{split}
        \end{equation*}
    since the function $\lambda \mapsto \varphi(f_0^2(\lambda))/\rho(\lambda)$ is non-increasing
    \item In the second case, we assume that Assumption \ref{ass_liftingcond} holds with a sufficiently large $\mu$ such that $\varphi^2((\Theta^{2\mu})^{-1}(t))$ is operator concave. Thus, together with the interpolation theorem we obtain
        \begin{equation*}
            \norm{\varphi(A^*A)w} \leq M \norm{\varphi\kl{\kl{\Theta^{2\mu}}^{-1} ((T^*T)^{\mu})}} \,.
        \end{equation*}
    Referring to the proof of \cite[Prop.5]{Mathe_2019}, we have $\kl{\Theta^{2\mu}}^{-1} (\lambda) = \kl{\Theta^{2}}^{-1} (\lambda^{1/\mu})$, which can be verified by applying $\Theta^{2\mu}$ on both sides. Thus, we obtain the bound
        \begin{equation*}
        \begin{split}
            \norm{\kl{ \ga(T^* T) T^* T - I} \fD }^2 & \leq C \norm{ \ra (T^* T) \varphi \kl{\kl{\Theta^{2\mu}}^{-1} ((T^*T)^{\mu})} \hat{w}  }^2
            \\
            & = C \norm{ \ra (T^* T)\varphi \kl{\kl{\Theta^{2}}^{-1} (T^*T)} \hat{w}  }^2
            \\
            & \leq  C \varphi^2(f_0^2(\alpha)) \,,
        \end{split}
        \end{equation*}
    where we used a similar calculation as in the previous case.
\end{enumerate}
Next, we consider the second term in \eqref{eq_mainerror}, i.e., $\ga(T^* T) T^* \delta \eta$. By Definition \ref{defn_regularization} and Assumption \ref{ass_noise}, we bound this term by classic techniques such that $\|\ga(T^* T) T^* \delta \eta\| \leq C \frac{\delta}{\sqrt{\alpha}}$. Finally, we combine the bounds for both terms, which yields the assertion.
\end{proof}

\begin{remark}
The first part of Theorem~\ref{thm_generalerror} is a simplified version of \cite[Theorem 4]{MR2772535} and both parts are consistent with those in regularization theory and Bayesian inference, see e.g.~\cite{Blanchard_Mathe_2012,Lu_Mathe_2014,Lin_Lu_Mathe_2015,Mathe_2019}. For illustration, we recall Example \ref{ex_linkcondition} and additionally assume the source condition such that
	\begin{itemize}
    	\item for some $\beta>0$, the exact solution $\fD$ has a coefficient expansion $\{\fD_j\}$ with respect to the eigensystem of $A$, and satisfies $\sum_{j=1}^{\infty} j^{2\beta} (\fD_j)^2 \leq 1$.
	\end{itemize}
Then the index function in Assumption \ref{ass_linkcond} is $\psi(t) = t^{\frac{1+2a}{4p}}$ and $\Theta^2(t) = t^{\frac{1+2a+2p}{2p}}$ in \eqref{eq_linkcond}. Furthermore, for the source condition we have $\varphi(t) = t^{\frac{\beta}{2p}}$. By Theorem~\ref{thm_generalerror}, for Tikhonov regularization, we obtain the total error bound
    \begin{equation*}
        \norm{\fD-\fad}_X \leq \alpha^{\frac{\beta}{1+2a+2p}} + \frac{\delta}{\sqrt{\alpha}} \,,
    \end{equation*}
with $\beta \in (0, 1+2a+2p]$ referring to the qualification in Assumption \ref{ass_source-set}. For other regularization schemes, one can obtain the same rate but with (in general) a larger interval of $\beta$ induced by their corresponding qualification.
\end{remark}

\section{Tikhonov regularization for computerized tomography via Sobolev embedding operators}\label{sect_Tikhonov}

In this section, we consider a Fourier-based Tikhonov regularization method for computerized tomography by implememting Sobolev embedding operators.

\subsection{Brief overview of the Radon transform}

We first provide an extended overview of the Radon transform referring to Example \ref{ex:ct}, which is of particular importance in CT. There, it is involved in the description of the following simple physical model of X-ray attenuation: Let the density function $f(x)$, compactly supported on the unit ball $\Omega \subset \Rn$, denote the X-ray attenuation coefficient of some tissue at the point $x$ and let $I_0$ be the intensity of the X-ray sent from its source. Then following the Lambert-Beer law, the intensity received by the detector behind the object along the straight line $\ell$ is given by
    \begin{equation*}
        I = I_0 \exp\kl{ \int_\ell f(x) \, dx } \,.
    \end{equation*}
These intensities are measured for all lines $\ell$ passing through the scanned object, with the lines themselves being characterized by an angular $\theta \in \mathbb{S}^{n-1}$ and a radial component $\kappa \in \R$, respectively. Defining the Radon transform as in Example~\ref{ex:ct}, i.e.,
    \begin{equation}\label{eq_RadonTransform}
    \begin{split}
        R: \LtO &\to L^2(\mathbb{S}^{n-1}\times\R)
        \\
        f(x)&\mapsto  Rf(\theta,\kappa):=\int_{x\cdot\theta=\kappa}f(x)dx=\int_{\theta^{\bot}}f(\kappa\theta+y)dy \,,
    \end{split}
    \end{equation}
the CT problem of estimating the density $f$ can thus be written in the form \eqref{eq_linearIP}, i.e.,
    \begin{equation}\label{o1}
        \yd:= \yd(\theta,\kappa)= (Rf)(\theta, \kappa) +\delta\zeta(\theta,\kappa) \,,
    \end{equation}
where $\zeta$ is the measurement noise and $\delta \geq 0$ is the noise level.

There are numerous algorithms for the inversion of the Radon transform, c.f.~\cite{Natterer_2001}, such as the classic inverse Radon transform, the filtered backprojection (FBP), or Fourier-based reconstruction methods. In case that there is no noise in the measurement, these algorithms allow to reconstruct the density function $f$ with high resolution. In the practically more relevant case of uniformly bounded measurement noise, regularization schemes have been used for stable reconstruction \cite{Natterer_2001}. Some classical choices are the algebraic reconstruction technique (ART, e.g., Kaczmarz's method and its variants \cite{Natterer_2001}), or the simultaneous algebraic reconstruction technique (SART, e.g., Landweber iteration and its variants \cite{Guan_Gordon_1996,Xu_Liow_Strother_1993,Andersen_Kak_1984}), although some other regularization methods for linear inverse problems can also be used \cite{Engl_Hanke_Neubauer_1996}. However, little is known specifically when the Radon transform is considered with irregular noise.

In this section, we focus on the Fourier-based reconstruction algorithms for the Radon transform. An asymptotic analysis of this type of reconstruction algorithms was given in \cite{Natterer_1985} when the measurements contain uniformly bounded noise. In particular, a modified inversion algorithm was also proposed in which the Fourier data on the Cartesian grid was obtained by moving the points onto the closest straight line on a polar grid, showing that the modified inversion algorithm is asymptotically optimal. In \cite{Cheung_Lewitt_1991}, a modified polar grid was proposed, which made the sampling data more efficient and the numerical reconstruction more accurate. Furthermore, a gridding method was proposed in \cite{Schomberg_Timmer_1995} which involves a window function to smoothen the convolution and provides a fast, accurate alternative algorithm to the FBP method. Later, a non-equispaced fast Fourier transform algorithm based on an exact Fourier series representation was developed in \cite{Fourmont_2003}. It is also worth mentioning that in \cite{Reynolds_Matthew_Beylkin_Monzan_2013} a new and fast polar coordinate Fourier domain algorithm was considered, which uses optimal rational approximations of projection data collected in X-ray tomography.

Despite the fact that Fourier-based reconstruction algorithms become more accurate, the effect of irregular noise is not particularly investigated. In order to deal with such noise, we now follow our approach above and derive a novel regularization scheme for the Radon transform. This scheme makes use of the Fourier slice theorem, which reconstructs the unknown density function $f$ by converting a collection of projection data into two-dimensional Fourier data on a polar grid.

\subsection{Fourier-based inversion algorithm for the Radon transform}

In this subsection, we consider the realization of the Fourier-based Tikhonov regularization method for CT by implementing Sobolev embedding operators. In particular, we derive an explicit form of the minimizer of the corresponding functional which serves as the basis for the numerical computation in the subsequent section. To this end, we first recall the definition and some properties of the Fourier transform in relation to the Radon transform which can be found, e.g., in \cite{Natterer_2001}. Please note that these results can be extended to compactly supported functions in $\LtRn$, in particular to $f \in \LtO$.

\begin{definition}\cite{Natterer_2001}\label{Def_1}
Let $f \in \S(\R^n)$, the Fourier transform $\F f$ is defined by \eqref{def_F}, and the inverse Fourier transform $\F^{-1} f$ is defined by
    \begin{equation*}
        \F^{-1} f (x) := \frac{1}{(2\pi)^n}\int_{\R^n} f(\xi)e^{ix\cdot\xi} \, d\xi \,,
        \qquad
        \forall \, x\in\R^n \,.
    \end{equation*}
\end{definition}

Next, for all $h\in\S(\mathbb{S}^{n-1}\times\R)$ we define $\F _{2}(h)(\theta,\sigma) :=\int_{\R}h(\theta,\kappa)e^{-i\kappa\sigma}d\kappa$ as the Fourier transform with respect to the second variable $\sigma$. Analogously, $\F ^{-1}_{2}(h)(\theta,\kappa):=\frac{1}{2\pi}\int_{\R}h(\theta,\sigma)e^{i\kappa\sigma}d\sigma$ denotes the inverse Fourier transform with respect to this variable.

\begin{lemma}\cite[Prop.~11 and 15]{Natterer_2001}
For $f\in\S(\R^n)$ and $h\in\S(\mathbb{S}^{n-1}\times\R)$, there holds
    \begin{equation}\label{eq_RF}
        \F _{2}(Rf)(\theta,\sigma)=\F (f)(\sigma\theta) \,,
    \end{equation}
and
    \begin{equation}\label{eq_RsF}
        \F (R^*h)(\xi)=(2\pi)^{n-1}\abs{\xi}^{1-n}\kl{\F _{2}(h)\kl{\frac{\xi}{\abs{\xi}},\abs{\xi}}+\F _{2}(h)\kl{-\frac{\xi}{\abs{\xi}},-\abs{\xi}}} \,.
    \end{equation}
\end{lemma}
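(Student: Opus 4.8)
The plan is to establish both identities by direct computation, relying only on the definitions of $R$, $\F$, $\F_2$, and a single orthogonal change of variables on $\Rn$.

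For the Fourier slice theorem \eqref{eq_RF}, I would first insert the definitions of $\F_2$ and of $Rf$ to write
\[
\F_2(Rf)(\theta,\sigma) = \int_\R \kl{\int_{\theta^\bot} f(\kappa\theta + y)\,dy}\, e^{-i\kappa\sigma}\,d\kappa \,.
\]
Then I would use that, for fixed $\theta\in\mathbb{S}^{n-1}$, the map $(\kappa,y)\mapsto x = \kappa\theta + y$ with $\kappa\in\R$ and $y\in\theta^\bot$ is an orthogonal decomposition of $\Rn$ with unit Jacobian, under which $\kappa = x\cdot\theta$. Since $\kappa\sigma = (x\cdot\theta)\sigma = x\cdot(\sigma\theta)$, the double integral collapses to $\int_\Rn f(x)\, e^{-ix\cdot(\sigma\theta)}\,dx = \F(f)(\sigma\theta)$, which is \eqref{eq_RF}. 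This step is routine, and the Schwartz decay of $f$ justifies Fubini throughout.

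For the backprojection identity \eqref{eq_RsF}, I would proceed by duality rather than computing $\F(R^*h)$ head-on. The key preliminary is the explicit form of the adjoint, $R^*h(x) = \int_{\mathbb{S}^{n-1}} h(\theta, x\cdot\theta)\,d\theta$, obtained by writing out $\spr{Rf,h}$ and applying the same orthogonal change of variables as above. I would then test $\F(R^*h)$ against an arbitrary $g\in\S(\Rn)$: combining Plancherel on $\Rn$ with $\spr{R^*h,g} = \spr{h,Rg}$, Parseval in the radial variable, and the slice theorem \eqref{eq_RF} applied to $Rg$, I obtain
\[
\frac{1}{(2\pi)^n}\int_\Rn \F(R^*h)(\xi)\,\overline{\F g(\xi)}\,d\xi = \frac{1}{2\pi}\int_{\mathbb{S}^{n-1}}\int_\R \F_2 h(\theta,\sigma)\,\overline{\F g(\sigma\theta)}\,d\sigma\,d\theta \,.
\]
The final step converts the left-hand integral to the coordinates $\xi = \sigma\theta$, with $(\sigma,\theta)\in\R\times\mathbb{S}^{n-1}$ and Jacobian $\abs{\sigma}^{n-1}$. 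Since this parametrization covers $\Rn\setminus\Kl{0}$ twice — the pairs $(\sigma,\theta)$ and $(-\sigma,-\theta)$ give the same $\xi$ — matching the integrands against the arbitrary factor $\overline{\F g(\sigma\theta)}$ yields the pointwise identity only after symmetrizing over this two-fold cover. This symmetrization is exactly what produces the sum of the two terms $\F_2 h(\xi/\abs{\xi},\abs{\xi})$ and $\F_2 h(-\xi/\abs{\xi},-\abs{\xi})$, together with the factor $(2\pi)^{n-1}\abs{\xi}^{1-n}$, giving \eqref{eq_RsF}.

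I expect the main obstacle to be the bookkeeping in this last step: correctly accounting for the two-fold covering of $\Rn$ (and the attendant factor $\tfrac12$, which cancels against the $\tfrac12$ from symmetrizing the right-hand side) so that the two antipodal contributions combine into the stated sum, rather than dropping one term or producing a spurious factor of two. An equivalent, more computational route avoids duality by writing $\F(R^*h)(\xi) = \int_{\mathbb{S}^{n-1}}\kl{\int_\Rn h(\theta,x\cdot\theta)\,e^{-ix\cdot\xi}\,dx}\,d\theta$ and evaluating the inner integral as $(2\pi)^{n-1}\F_2 h(\theta,\theta\cdot\xi)\,\delta(P_{\theta^\bot}\xi)$; there the factor $\abs{\xi}^{1-n}$ emerges instead from linearizing $\theta\mapsto P_{\theta^\bot}\xi$ near the two points $\theta=\pm\xi/\abs{\xi}$, and the delicate point becomes the Jacobian of the surface delta against the spherical measure. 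Either way, the antipodal geometry of $\mathbb{S}^{n-1}$ is the crux.
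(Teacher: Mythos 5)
The paper gives no proof of this lemma at all --- it is quoted directly from \cite[Prop.~11 and 15]{Natterer_2001} --- and your argument is correct and is essentially the standard proof found there: the orthogonal splitting $x=\kappa\theta+y$ with unit Jacobian for the slice theorem, and duality ($\spr{R^*h,g}=\spr{h,Rg}$ with Plancherel in $x$ and Parseval in $\kappa$) combined with the polar substitution $\xi=\sigma\theta$ for the backprojection formula, where the antipodal two-fold cover of $\Rn\setminus\Kl{0}$ is precisely what produces the symmetrized two-term sum and the factor $(2\pi)^{n-1}\abs{\xi}^{1-n}$. Your bookkeeping on the delicate step is also right: since the test factor $\overline{\F g(\sigma\theta)}$ is invariant under $(\theta,\sigma)\mapsto(-\theta,-\sigma)$, matching integrands can only identify the antipodally even part of $\F_2 h$, which is exactly the sum appearing in \eqref{eq_RsF}, with the two factors of $\tfrac12$ cancelling as you note.
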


Next, in order to fit the Radon transform we consider the embedding operator
    \begin{equation*}
        \Is : L^2(\mathbb{S}^{n-1}\times\R)\to H^{-s}(\mathbb{S}^{n-1}\times\R) \,,
    \end{equation*}
where analogously to Section~\ref{sect_background} the spaces $H^{s}(\mathbb{S}^{n-1}\times\R)$ are defined by
	\begin{equation*}
		H^{s}(\mathbb{S}^{n-1}\times\R) := \Kl{ u \in \S^*(\mathbb{S}^{n-1}\times\R) \, \vert \, \mathcal{F}_2^{-1}\kl{ (1 + \abs{\sigma}^2)^{s/2} \mathcal{F}_2(u)(\theta,\sigma)} \in L^2(\mathbb{S}^{n-1}\times\R)} \,,
	\end{equation*}
and are equipped with the inner product
	\begin{equation}\label{eq_helper}
		\spr{u,v}_{H^{s}(\mathbb{S}^{n-1}\times\R)} := (2\pi)^{-2} \int_{\mathbb{S}^{n-1}} \int_\R  (1 + \abs{\sigma}^2)^{s} \mathcal{F}_2(u)(\theta,\sigma)\mathcal{F}_2(v)(\theta,\sigma) \, d\sigma d\theta  \,.
	\end{equation}
As in Section~\ref{subsect_embd} we have that the operator $\Is$ is well-defined and, due to \eqref{eq_helper}, its adjoint operator $\Is^* : H^{-s}(\mathbb{S}^{n-1}\times \R)\to L^2(\mathbb{S}^{n-1}\times\R)$ can be characterized by
    \begin{equation}\label{dual_E_Radon}
    	\Is^* u = \mathcal{F}_2^{-1}\big( (1 + \abs{\sigma}^2)^{-s} \mathcal{F}_2(u)(\theta,\sigma) \big).
    \end{equation}
As above, we assume that the measurement is corrupted by irregular noise, i.e., $\yd \in H^{-s}(\mathbb{S}^{n-1} \times \R)$ for some $s > 0$, and define the preprocessed data $\gd :=\Is^* \yd$ and the modified forward operator $\Rt := \Is^* R \, : \LtO \to L^2(\mathbb{S}^{n-1}\times\R)$. With this we, analogously to \eqref{eq_linear_newIP}, obtain the preprocessed problem
    \begin{equation}\label{eq_preRadon}
        \gd = \Rt f + \delta \eta \,,
        \qquad
        \text{where}
        \qquad
        \eta = \Is^* {\zeta}
    \end{equation}
which yields the following Tikhonov regularization method
    \begin{equation}\label{re}
        \fad: = \underset{f \in \LtO}{\arg\min} \Kl{\norm{\Rt f-\gd}_{\LtSnmR}^2 + \alpha \norm{f}_{\LtO}^2} \,.
    \end{equation}
The above minimizer $\fad$ can be calculated explicitly and is derived below.

\begin{theorem}\label{t1}
The minimizer $\fad$ of the Tikhonov functional \eqref{re} is characterized by
    \begin{equation*}
        \kl{\F \fad}(\xi) = \frac{\F _2{y}^{\delta}\kl{\frac{\xi}{\abs{\xi}},\abs{\xi}} +\F _2{y}^{\delta}(-\frac{\xi}{\abs{\xi}},-\abs{\xi})} {2+\alpha(2\pi)^{1-n}\abs{\xi}^{n-1}(1+\abs{\xi}^2)^s} \,, \quad \xi\in\Rt \,.
    \end{equation*}
In particular, if $\alpha=0$ the above formula is equivalent to the Fourier slice theorem.
\end{theorem}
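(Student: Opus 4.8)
The plan is to characterize $\fad$ through the first-order optimality condition of the functional in \eqref{re} and then to diagonalize the resulting normal equation by the Fourier transform, using the slice and backprojection identities \eqref{eq_RF} and \eqref{eq_RsF}. First I would record that, by \eqref{dual_E_Radon}, $\Is^*$ is a self-adjoint Fourier multiplier on $\LtSnmR$, so that $\Rt = \Is^* R$ has adjoint $\Rt^* = R^* \Is^*$. Since the functional in \eqref{re} is strictly convex and coercive for $\alpha>0$, setting its G\^ateaux derivative to zero gives the normal equation $\kl{\Rt^*\Rt + \alpha I}\fad = \Rt^*\gd$, which with $\gd = \Is^*\yd$ becomes
\begin{equation*}
    \kl{R^* (\Is^*)^2 R + \alpha I}\fad = R^* (\Is^*)^2 \yd \,.
\end{equation*}

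Next I would apply $\F$ to both sides and convert each operator into a pointwise multiplier in the frequency variable $\xi$. On the left, the slice theorem \eqref{eq_RF} identifies $\F_2(R\fad)$ with $\F(\fad)$ evaluated on the radial line, the multiplier $(\Is^*)^2$ contributes its (squared) Sobolev symbol, and inserting the outcome into the backprojection identity \eqref{eq_RsF} at the two points $\kl{\xi/\abs{\xi},\abs{\xi}}$ and $\kl{-\xi/\abs{\xi},-\abs{\xi}}$ shows that both evaluations return the same value, because $\sigma\theta = \xi$ in either case. This antipodal symmetry is exactly what produces the constant $2$ in the denominator and reveals that $R^*(\Is^*)^2 R$ acts as a radial multiplier. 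The identical computation applied to $\yd$ turns the right-hand side into the same prefactor (coming from $\abs{\xi}^{1-n}$ in \eqref{eq_RsF} and the Sobolev symbol) multiplying the sum $\F_2\yd\kl{\xi/\abs{\xi},\abs{\xi}} + \F_2\yd\kl{-\xi/\abs{\xi},-\abs{\xi}}$; here the two terms do not collapse, precisely because $\yd$ is not of the form $Rf$.

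Solving the resulting scalar equation pointwise in $\xi$ then yields the claimed formula: the common factor built from $\abs{\xi}^{1-n}$ and the Sobolev symbol cancels between the numerator and the leading part of the denominator, so the singular weight $\abs{\xi}^{1-n}$ disappears from the quotient and only the regularization term retains an explicit $\abs{\xi}$-dependence. For the limiting case $\alpha = 0$ the denominator collapses to $2$; substituting noiseless data $\yd = Rf$ and applying \eqref{eq_RF} once more collapses the averaged numerator to $\F(f)(\xi)$, which is exactly the Fourier slice theorem, as asserted.

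The hard part will not be the algebra but its justification. The identities \eqref{eq_RF} and \eqref{eq_RsF} are stated for Schwartz functions, whereas here $\fad$ is only a compactly supported $\LtO$ element and $\yd$ lives in a negative-order Sobolev space as in \eqref{eq_preRadon}. I would therefore need a density/continuity argument to transfer the multiplier computation to this setting: Paley--Wiener makes pointwise values of $\F\fad$ meaningful on the radial slice, while the smoothing supplied by $(\Is^*)^2$ is what gives a well-defined restriction (trace) of $\F_2\yd$ along the cone despite the irregularity of the noise. Finally I would have to check that the apparent $\abs{\xi}^{1-n}$ singularity at the origin genuinely cancels in the quotient, so that the final expression for $\F\fad$ is well-defined there.
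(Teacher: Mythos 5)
Your overall route --- form the normal equation, diagonalize it with the Fourier transform via the slice identity \eqref{eq_RF} and the backprojection identity \eqref{eq_RsF}, use the antipodal symmetry $\sigma\theta=\xi$ at both evaluation points to produce the constant $2$, and recover the Fourier slice theorem at $\alpha=0$ --- is exactly the paper's. The gap is in your multiplier bookkeeping, and it is not cosmetic. You set $\Rt^{*}=R^{*}\Is^{*}$, so your normal equation $\kl{R^{*}(\Is^{*})^{2}R+\alpha I}\fad=R^{*}(\Is^{*})^{2}\yd$ carries the \emph{squared} Sobolev symbol $(1+\abs{\sigma}^{2})^{-2s}$. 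Pushing this through \eqref{eq_RsF} and \eqref{eq_RF} exactly as you describe gives
\begin{equation*}
    \kl{\F\fad}(\xi)
    =
    \frac{\F_2\yd\kl{\tfrac{\xi}{\abs{\xi}},\abs{\xi}}+\F_2\yd\kl{-\tfrac{\xi}{\abs{\xi}},-\abs{\xi}}}
    {2+\alpha(2\pi)^{1-n}\abs{\xi}^{n-1}(1+\abs{\xi}^{2})^{2s}} \,,
\end{equation*}
with exponent $2s$, not the exponent $s$ of the statement: the common factor $(2\pi)^{n-1}\abs{\xi}^{1-n}(1+\abs{\xi}^{2})^{-2s}$ indeed cancels against the numerator and the leading $2$, but its \emph{reciprocal} multiplies $\alpha$, so the Sobolev symbol does not drop out of the regularization term the way the weight $\abs{\xi}^{1-n}$ does. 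Your assertion that the algebra ``yields the claimed formula'' is therefore false under your own normal equation; your derivation and the target formula differ by one power of $(1+\abs{\xi}^{2})^{s}$, and this is glossed over rather than resolved.

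For comparison, the paper's proof carries exactly \emph{one} multiplier: it writes $\Rt^{*}\Rt\fad=R^{*}\Is\Is^{*}R\fad$ and $\Rt^{*}\gd=R^{*}\Is\Is^{*}\yd=R^{*}\Is^{*}\yd$, i.e., a single factor $\Is\Is^{*}$ with symbol $(1+\abs{\sigma}^{2})^{-s}$ sandwiched between the classical $L^{2}$ backprojection $R^{*}$ of \eqref{eq_RsF} and the data; the same Fourier computation then produces the stated denominator $2+\alpha(2\pi)^{1-n}\abs{\xi}^{n-1}(1+\abs{\xi}^{2})^{s}$. In fairness, your adjoint identification is the literal one for the functional \eqref{re}: viewing $\Rt$ on $\LtSnmR$ as $R$ followed by the self-adjoint multiplier of \eqref{dual_E_Radon}, one does get the multiplier applied twice in $\Rt^{*}\Rt$, whereas the paper's count corresponds to reading $R^{*}$ in $\Rt^{*}=R^{*}\Is$ as the adjoint with respect to the $H^{-s}$ structure while evaluating it with the $L^{2}$ formula \eqref{eq_RsF}. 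So you have exposed a genuine tension in the constants, but as a proof of the theorem \emph{as stated} your argument fails: you must either reproduce the paper's single-multiplier normal equation or accept the exponent $2s$; you cannot keep $(\Is^{*})^{2}$ and conclude with $(1+\abs{\xi}^{2})^{s}$. Your closing remarks on density, Paley--Wiener, and the trace of $\F_2\yd$ along the cone are sensible rigor points (which the paper also glosses), but they are orthogonal to this quantitative gap.
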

\begin{proof}
First of all, we note that the minimizer $\fad$ of \eqref{re} is given as the solution of
    \begin{equation*}
        (\Rt^*\Rt+\alpha I)\fad = \Rt^* \gd \,.
    \end{equation*}
Taking the Fourier transform on both sides yields
    \begin{equation}\label{eq_Radon_minimizer}
        \F(\Rt^*\Rt \fad) + \alpha \F \fad = \F(\Rt^* \gd) \,.
    \end{equation}
Next, note that since $\Rt = \Is^* R$ and $\gd = \Is^*\yd$, it follows with \eqref{eq_RsF} that
    \begin{equation*}
    \begin{split}
       \F (\Rt^*\gd)(\xi)
        &=
        \F (R^*\Is \Is^* \yd)(\xi)
        =
        \F (R^*\Is^* \yd)(\xi)
        \\
        & \overset{\eqref{eq_RsF}}{=}
        (2\pi)^{n-1}\abs{\xi}^{1-n}\kl{\F_2(\Is^* \yd) \kl{\frac{\xi}{\abs{\xi}},\abs{\xi}} +\F_2(\Is^* \yd)\kl{-\frac{\xi}{\abs{\xi}},-\abs{\xi}}} \,,
    \end{split}
    \end{equation*}
and thus together with the Fourier characterization \eqref{dual_E_Radon} of $\Is^*$ we obtain
   \begin{equation}\label{A5}
    \begin{split}
        \F (\Rt^*\gd)(\xi)
        =
        (2\pi)^{n-1}\abs{\xi}^{1-n}(1+\abs{\xi}^2)^{-s}\kl{\F _2(\yd)\kl{\frac{\xi}{\abs{\xi}},\abs{\xi}}+\F _2(\yd)\kl{-\frac{\xi}{\abs{\xi}},-\abs{\xi}}} \,.
    \end{split}
    \end{equation}
Similarly, for the term $\F (\Rt^*\Rt f)(\xi)$ we obtain together with \eqref{eq_RF} and \eqref{eq_RsF} that
    \begin{equation*}
    \begin{split}
        &\F (\Rt^*\Rt \fad)(\xi)
        =
        \F (R^*\Is  \Is^*  R \fad )(\xi)
        \\
        &\qquad \overset{\eqref{eq_RsF}}{=}
        (2\pi)^{n-1}\abs{\xi}^{1-n}\kl{\F_2( \Is  \Is^*  R \fad)\kl{\frac{\xi}{\abs{\xi}},\abs{\xi}} +\F_2( \Is  \Is^*  R \fad)\kl{-\frac{\xi}{\abs{\xi}},-\abs{\xi}}}
        \\
        &\qquad \overset{\eqref{dual_E_Radon}}{=}
        (2\pi)^{n-1}\abs{\xi}^{1-n}(1+\abs{\xi}^2)^{-s}\kl{\F_2(R\fad)\kl{\frac{\xi}{\abs{\xi}},\abs{\xi}} +\F_2(R\fad)\kl{-\frac{\xi}{\abs{\xi}},-\abs{\xi}}}
        \\
        &\qquad \overset{\eqref{eq_RF}}{=}
        2(2\pi)^{n-1}\abs{\xi}^{1-n}(1+\abs{\xi}^2)^{-s}\F (\fad)(\xi) \,.
    \end{split}
    \end{equation*}
Inserting this and \eqref{A5} into \eqref{eq_Radon_minimizer} we thus obtain that
    \begin{equation*}
    \begin{split}
        & 2(2\pi)^{n-1}\abs{\xi}^{1-n}(1+\abs{\xi}^2)^{-s}\F (\fad)(\xi)
        +
        \alpha \F (\fad)(\xi)
        \\
        & \qquad =
        (2\pi)^{n-1}\abs{\xi}^{1-n}(1+\abs{\xi}^2)^{-s}\kl{\F _2(\yd)\kl{\frac{\xi}{\abs{\xi}},\abs{\xi}}+\F _2(\yd)\kl{-\frac{\xi}{\abs{\xi}},-\abs{\xi}}} \,,
    \end{split}
    \end{equation*}
which yields the assertion after rearranging the terms.
\end{proof}

Concerning the convergence analysis of our Fourier-based Tikhonov regularization scheme \eqref{re} we refer to Theorem~\ref{thm_generalerror}. 

\section{Numerical experiments}\label{section_numerics}

In this section, we describe the numerical realization of the Fourier-based Tikhonov regularization method \eqref{re} and compare its performance with some other approaches.

\subsection{Numerical reconstruction algorithm}

Since we assume that the density function $f$ is compactly supported, it can be represented by an $2M\times 2M$ pixel image which is zero when $\abs{x} > \varrho$ for some $\varrho > 0$. Naturally, $f$ cannot be band-limited, but we may assume that its Fourier transform is concentrated within $[-N,N]^2$. Furthermore, we assume that the original image $f(x)$ is zero outside $[-\tau,\tau]\times[-\tau,\tau]$ for some $\sqrt{\tau} \leq \varrho$, and define the mesh size $h_x=\tau/M$. Based on this discretization, we now derive a numerical method for calculating the minimizer $\fad$ of our reconstruction approach \eqref{re} for exact measurements $y$, as well as the slight modification necessary to also treat noisy measurements $\yd$.

First, recall that due to Theorem~\ref{t1} the minimizer of \eqref{re} is characterized by
    \begin{equation}\label{A2}
        \F \fa (\xi)
        =
        \frac{\F_2{y}^{\delta}\kl{\frac{\xi}{\abs{\xi}},\abs{\xi}}
        + \F_2 {y}^{\delta}\kl{ -\frac{\xi}{\abs{\xi}},-\abs{\xi}}} {2+\alpha(2\pi)^{1-n}\abs{\xi}^{n-1}(1+\abs{\xi}^2)^s} \,.
    \end{equation}
Since the computation of $\fa$ via this formula is not straightforward, we now describe the detailed numerical realization of \eqref{A2}. We start by estimating the value of $\F_2 y^{\delta}(\frac{\xi}{\abs{\xi}},\abs{\xi})$ by giving the parallel beam discrete observation in the half cylinder. For this, suppose that $y^{\delta}(\theta,\kappa)$ is available for the following values of $\theta$ and $\kappa$:
    \begin{equation*}
    \begin{split}
        \theta &= \theta_j
        = \Kl{\begin{matrix}\cos\phi_j\\ \sin\phi_j\end{matrix} }\,,
        \quad \phi_j =\frac{j\pi}{p} \,,
        \quad j=0\,, \dots \,, p-1 \,,
        \\
        \kappa &= \kappa_l = \frac{l\varrho}{q} \,,
        \quad l = -q \,, \dots \,, q \,.
    \end{split}
    \end{equation*}
We note that the measurement $y^{\delta}$ is zero for $\abs{\kappa}>\varrho$, since we have assumed that the density function $f$ is compactly supported. Next, we consider the evaluation of
    \begin{equation*}
        \F _2 y^{\delta}\kl{\frac{\xi_k}{\abs{\xi_k}},\abs{\xi_k}} \,,
        \quad \xi_k = (k_1 h_{\xi},k_2 h_{\xi}) \,,
        k=(k_1,k_2) \in \Z^2\,,
        k_I = -dM\,,\dots\,,dM (I=1,2) \,,
    \end{equation*}
where $d\geq1$ is the oversampling rate in the frequency domain, and $h_{\xi}$ is the mesh size defined by $h_{\xi} = N/(dM)$. Due to the symmetry property $y(\theta,\kappa) = y(-\theta,-\kappa)$ for exact data $y$, it follows that $y(\frac{j\pi}{p} + \pi,\frac{l\varrho}{q}) = y(\frac{j\pi}{p},-\frac{l\varrho}{q})$, allowing us to extend the observation to $\pi \leq \phi \leq 2\pi$. With this, the value of $\F_2 y^{\delta}(\frac{\xi}{\abs{\xi}},\abs{\xi})$ can then be approximated by
    \begin{equation*}
    \begin{split}
        \F_2 y^{\delta}\kl{\frac{\xi_k}{\abs{\xi_k}}, \abs{\xi_k}}
        =
        \int_{\R} e^{-i \abs{\xi_k} \kappa} y^{\delta}\kl{ \frac{\xi_k}{\abs{\xi_k}},\kappa} \, d\kappa
        \approx
        \frac{\varrho}{q}
        \sum_{l=-q}^{q} e^{-i\abs{\xi_k}\kappa_l} y^{\delta}\kl{\frac{\xi_k}{\abs{\xi_k}},\kappa_l} \,,
    \end{split}
    \end{equation*}
for $k\neq\mfz $. The value of $y^{\delta}(\frac{\xi_k}{\abs{\xi_k}},\kappa_l)$ for $l = -q\,, \dots \,, q$ can be evaluated by linear interpolation as follows: For each Cartesian grid point $k = (k_1,k_2) \in \Z^2$ with $k_I = -dM\,, \dots \,, dM (I=1,2)$, we can estimate $y^{\delta}(\frac{\xi_k}{\abs{\xi_k}},\kappa_l)$ by the following steps:
\begin{enumerate}
    \item Convert $\xi_k$ into the polar coordinates $(\phi_k',\kappa_k')$.
    \item Compute $K = \lfloor\phi_k'p / \pi\rfloor$. The two beams next to $\xi_k$ then are
        \begin{equation*}
            (\cos(K\pi/p),\sin(K\pi/p))\,,
            \qquad
            \text{and}
            \qquad
            (\cos((K+1)\pi/p),\sin((K+1)\pi/p) \,.
        \end{equation*}
    \item Compute the linear interpolation coefficients $a_{k}, b_{k}$ via
        \begin{equation*}
            a_{k} = (\kl{K+1} \pi / p-\phi_k') \cdot p / \pi \,,
            \qquad
            \text{and}
            \qquad
            b_{k} = (\phi_k'-K\pi / p) \cdot p / \pi \,.
        \end{equation*}
    \item Estimate $y^{\delta}(\frac{\xi_k}{\abs{\xi_k}},\kappa_l)$ by $a_k y^{\delta}(\theta_K,\kappa_l) + b_k y^{\delta}(\theta_{K+1},\kappa_l)$.
\end{enumerate}
On the other hand, if $k = \mfz$, due to \eqref{eq_RF} for any $\theta\in\mathbb{S}$ there holds $\F_2 y(\theta,0)=\F f(\mfz)$. The numerical evaluation of the term $\F_2 y^{\delta}(-\frac{\xi_k}{\abs{\xi_k}},-\abs{\xi_k})$ can be done analogously to the above steps by setting $y^{\delta}(\frac{j\pi}{p} + \pi,\frac{l\varrho}{q}) = y^{\delta}(\frac{j\pi}{p},-\frac{l\varrho}{q})$. After these preliminaries we can now state our main algorithm.

\begin{algorithm}\label{algo}
Let $y_{j,l} = y^{\delta}(\theta_j,s_l)$ be given for $j=0\,, \dots \,, p-1$, and $l=-q\,,\dots\,,q$. Then the estimate $\fam$ of the original image $f$ at the points $(m_1h_x,m_2h_x)$ for $m\in\Z^2$ and $m_I=-M,\dots,M(I=1,2)$ is computed by
\begin{itemize}
    \item \emph{Step 1:} Extend $y_{j,l}$ to the case $j=0,\dots,2p-1$ and $l=-q,\dots,q$ by setting $y_{j+p,l} = y_{j,-l}$. For $k_I = -dM \,, \dots \,, dM(I=1,2)$ and $k \neq \mfz $, we compute
        \begin{equation*}
            \F_2 y_k : =
            \frac{\varrho}{q}
            \sum_{l=-q}^{q} e^{-i \abs{\xi_k} \kappa_l}(a_k y^{\delta}(\theta_K,\kappa_l) + b_k y^{\delta}(\theta_{K+1},\kappa_l)) \,.
        \end{equation*}
    For $k = \mfz $, we compute $\F_2 y_0=\frac{1}{p}\sum_{j=0}^{p-1}\F_2 y^{\delta}(\theta_j ,0)$. Here, if $K+1 = 360$, we substitute $y^{\delta}(\theta_{K+1},\kappa_l)$ by $y^{\delta}(\mfz ,\kappa_l)$. Further, we obtain $\F_2 y_{-k}$ as an estimate of $\F_2 y(-\frac{\xi}{\abs{\xi}},-\abs{\xi})$ analogously to the above approach. One may implement the algorithm in \cite{Fourmont_2003} to accelerate this step.
    \item \emph{Step 2:} Recalling \eqref{A2}, we compute a discrete approximation $\F \fak$ of $\F \fa$ via
        \begin{equation*}
            \F \fak :=
            \frac{\F_2 y_k + \F_2 y_{-k}}{2 + \alpha(2\pi)^{1-n}\abs{\xi_k}^{n-1}(1+\abs{\xi_k}^2)^s} \,.
        \end{equation*}
    \item \emph{Step 3:} Compute the inverse Fourier transform of $\F \fak,k\in\Z^2,k_I=-dM,\dots,dM(I=1,2)$ by truncating the frequency domain and computing its discrete form, i.e.,
        \begin{equation*}
        \begin{split}
        	f_{\alpha,m}=\left(\frac{N}{dM}\right)^2\left(\frac{1}{2\pi}\right)^2\sum_{k_1,k_2=-dM}^{dM}e^{ix_m\xi_k}\mathcal{F}f_{\alpha,k} ,\\m=(m_1,m_2)\in\mathbb{Z}^2,m_I=-M,\cdots,M(I=1,2),
        \end{split}
        \end{equation*}
    for which we can use a standard inverse FFT. Then, $f_{\alpha,m}$ is the estimate of $\fad$ at the point $(m_1h_x,m_2h_x),h_x=\tau/M$.
\end{itemize}
\end{algorithm}

\subsection{Numerical examples}\label{subse4_2}

We provide several numerical examples verifying the efficiency of our proposed Fourier-based Tikhonov regularization method \eqref{re}, i.e., Algorithm~\ref{algo} in this subsection. As test models we use the dataset measured from a carved cheese and a walnut provided by the Finish Inverse Problems Community, see \cite{bubba2017tomographic,hamalainen2015tomographic} for further details.

\begin{example}[Reconstruction with downsampling.]
    \begin{figure}[ht!]
        \centering
        \includegraphics[width=16cm]{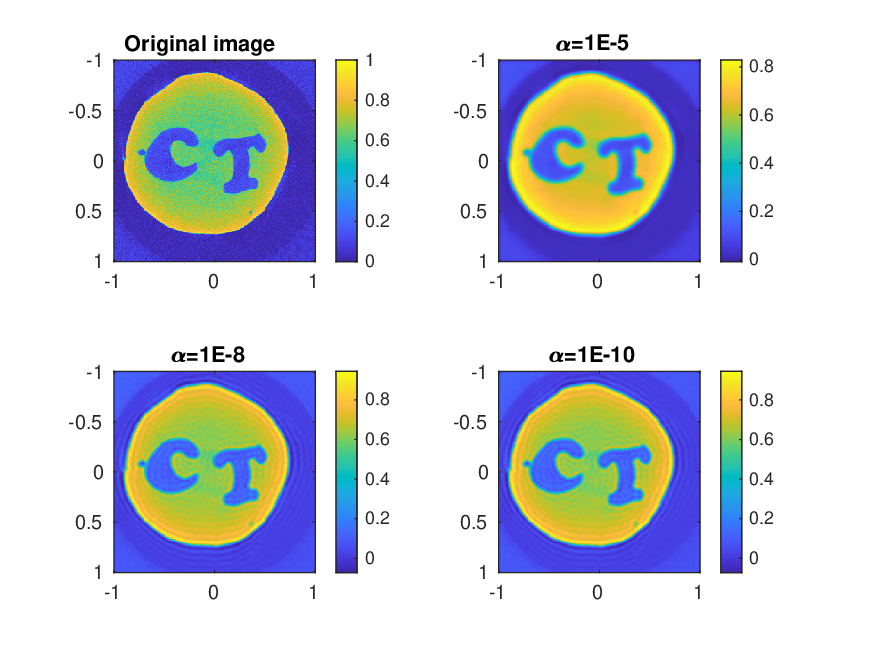}
        \caption{Original image of carved cheese (upper left) and images reconstructed with the Fourier-based Tikhonov regularization method \eqref{re} with $\alpha=10^{-5},10^{-8},10^{-10}$.}
        \label{nonoise}
    \end{figure}
    \begin{table}[ht!]\centering
		\begin{tabular}{|c|c|c|c|c|c|c|}
            \hline     & $\alpha=10^{-5}$ & $\alpha=10^{-8}$ & $\alpha=10^{-10}$ \\
			\hline MSE     &0.0050& 0.0034 &0.0034            \\
				\hline PSNR    &23.0213& 24.7253 &24.7257            \\
                \hline SSIM     &0.4888& 0.4966&0.4963                  \\
				\hline
		\end{tabular}
		\caption{Comparison of average MSE, SSIM and PSNR for the Fourier-based Tikhonov regularization method \eqref{re} with downsampling measurement for the carved cheese.}\label{tabnonoise}
    \end{table}

    \begin{figure}[ht!]
        \centering
        \includegraphics[width=16cm]{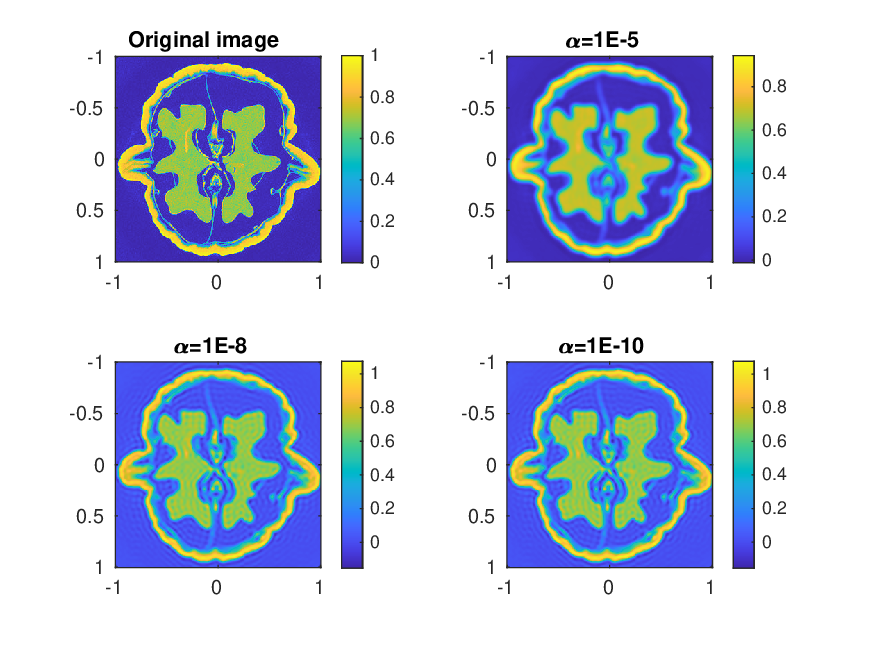}
        \caption{Original image of walnut (upper left) and images reconstructed with the Fourier-based Tikhonov regularization method \eqref{re} with $\alpha=10^{-5},10^{-8},10^{-10}$.}
        \label{nonoisewal}
    \end{figure}

    \begin{table}[ht!]\centering
        \begin{tabular}{|c|c|c|c|c|c|c|}
            \hline     & $\alpha=10^{-5}$ & $\alpha=10^{-8}$ & $\alpha=10^{-10}$ \\
			\hline MSE  & 0.0166 & 0.0119 & 0.0119 \\
			\hline PSNR & 17.8070 & 19.2459 & 19.2466 \\
            \hline SSIM & 0.5834 & 0.5775 & 0.5766 \\
			\hline
		\end{tabular}
		\caption{Comparison of average MSE, SSIM and PSNR for the Fourier-based Tikhonov regularization method \eqref{re} with downsampling measurement for the walnut.}\label{tabnonoisewal}
    \end{table}

In the first experiment, we took the high resolution CT measurements of the carved cheese image and the walnut image, and downsampled it from a dimension of $2000\times 2000$ to $300\times 300$ and a dimension of $2296\times 2296$ to $300\times300$, respectively. The proposed reconstruction approach is \eqref{re} or Algorithm~\ref{algo}. Due to the averaging during the downsampling, we expect the data being only affected with little noise. For the embedding operator we choose  $s=1.2$. Figures~\ref{nonoise} and \ref{nonoisewal} depict the original image as well as our reconstructed images for different small values of the regularization parameter $\alpha$. A quantitative comparison of the different reconstructions via the average MSE (Mean Square Error), PSNR (Peak Signal to Noise Ratio), and SSIM (Structured Similarity Indexing Method) for different parameters $\alpha$ is given in Tables \ref{tabnonoise} and \ref{tabnonoisewal}. As can be seen from the reconstructed images and the quantitative comparison, our reconstruction algorithm performs well in particular for small values of $\alpha$, which is to be expected in case of noise free measurements.
\end{example}

\begin{example}[Reconstruction with additional irregular noise]\label{example_2}

In the second experiment, we investigate the behaviour of our Fourier-based Tikhonov regularization method with data where we included additional noise in the measurements. To validate the recovery accuracy, we first focus on a comparison of the best reconstructions obtained with three different methods. In particular, we compare our  proposed method \eqref{re} for the preprocessed problem with FBP, and Tikhonov regularization with a total variation (TV) penalty for the original inverse problem, i.e.,
    \begin{equation}\label{eq_TVregu}
        \fd_{TV}:= {\arg\min}_f \Kl{ \norm{R f}_{\LtSR}^2 - 2 \spr{ Rf, \yd }_{\LtSR} + \alpha \norm{ f }_{TV} } \,.
    \end{equation}
To compare the performance of different reconstruction algorithms, we again consider the carved cheese and walnut samples, to which we now add Gaussian noise, i.e., $y^\delta(\theta,\kappa):= y(\theta,\kappa) + \delta \cdot \max|y(\theta,\kappa)| \cdot \mathcal{N}(0,1)$ with $\delta=0.3$, $0.5$ respectively. In order to minimize the influence of randomness on the comparison, we apply the reconstruction methods to $10$ different tests with the same noise level, and record the average MSE, PSNR, and SSIM in Tables~\ref{ex2_table1}-\ref{ex2_table2}. As can be observed, the FBP algorithm does not provide reasonable reconstruction in case of additional noise. On the other hand, our proposed method yields comparable results to those obtained via Tikhonov regularization with a TV penalty. In particular for a large noise level, our proposed method seems to be more robust than the other two.

    \begin{table}[ht!]\centering
		\begin{tabular}{|c|c|c|c|c|c|c|}
            \hline      & Filtered Backprojection & Total Variation & Our method\\
			\hline MSE     &4.4431& 0.0217 & 0.0155   \\
			\hline PSNR    &13.6359& 16.6408 & 18.0989           \\
            \hline SSIM     &0.0010& 0.2568&0.2798                  \\
			\hline
		  \end{tabular} \\
		\begin{tabular}{|c|c|c|c|c|c|c|}
            \hline      & Filtered Backprojection & Total Variation & Our method \\
			\hline MSE     &5.5673& 0.0466 &  0.0351          \\
			\hline PSNR    &13.1493& 13.5046 & 14.4980           \\
            \hline SSIM     &0.0023& 0.2030 & 0.2001                 \\
			\hline
		\end{tabular}
		\caption{MSE, SSIM, and PSNR for the reconstruction methods with $\delta=0.3$ averaged over $10$ different tests. Carved cheese sample (top) and walnut sample (bottom table).}\label{ex2_table1}
    \end{table}


    \begin{table}[ht!]\centering
        \begin{tabular}{|c|c|c|c|c|c|c|}
            \hline      & Filtered Backprojection & Total Variation & Our method \\
			\hline MSE     &11.1273& 0.0390  & 0.0230          \\
			\hline PSNR    &13.5498& 14.0904 &  16.3995          \\
            \hline SSIM     &$2.9988\cdot10^{-4}$& 0.1920  & 0.2751               \\
			\hline
		\end{tabular}
        \\
		\begin{tabular}{|c|c|c|c|c|c|c|}
            \hline      & Filtered Backprojection & Total Variation & Our method\\
			\hline MSE     &12.9145& 0.0718  & 0.0479          \\
			\hline PSNR    &13.4330& 11.4367 & 13.1949           \\
            \hline SSIM     &$6.9233\cdot10^{-4}$& 0.1436 &  0.1904                 \\
			\hline
		\end{tabular}
		\caption{MSE, SSIM, and PSNR for the reconstruction methods with $\delta=0.5$ averaged over $10$ different tests. Carved cheese sample (top) and walnut sample (bottom table).}\label{ex2_table2}		
    \end{table}

\end{example}

It is worth to note that our proposed approach yields a well-defined discrepancy term $\norm{\Rt f - \gd}_Y^2$, even though the actual values may be large due to the random noise as in the above Example \ref{example_2}. This allows us to consider discrepancy-based \emph{a-posteriori} parameter choice rules such as the modified discrepancy principle \cite{Blanchard_Mathe_2012,Lu_Mathe_2014} or heuristic parameter choices rules \cite[Ch.4]{Engl_Hanke_Neubauer_1996}. Here, we consider the modified L-curve method \cite{Reginska_1996,Lu_Mathe_2013}, which determines the \emph{a-posteriori} regularization parameter $\alpha_*$ via the minimization problem
    \begin{equation}\label{eq_Lcurve1}
        \alpha_{*}= \min_{\alpha} J_F(\alpha) \,,
        \qquad
        \text{where}
        \qquad
        J_F(\alpha):= \norm{ \Rt \fad - \gd }^2_{\LtSR} \norm{ \fad}_{\LtRt}^2 \,.
    \end{equation}
Note that this heuristic parameter choice rule cannot be implemented directly for Tikhonov regularization with TV penalty given in \eqref{eq_TVregu}, since the discrepancy there is not well-defined due to the irregular noise. However, it is possible to consider Tikhonov regularization with a TV penalty for the preprocessed problem, i.e.,
    \begin{equation}\label{eq_TVregu2}
        \ftdTV:= {\arg\min}_{\ft} \Kl{ \norm{\Rt \ft -\gd}_Y^2 + \alpha\norm{\ft}_{TV} } \,.
    \end{equation}
Even though a theoretical analysis for the above approach is lacking, one can implement this method together with the modified L-curve method
    \begin{equation}\label{eq_Lcurve2}
        \alpha_{*,TV} := \min_{\alpha} \JTV(\alpha) \,,
        \qquad
        \text{where}
        \qquad
        \JTV(\alpha) := \norm{\Rt \fdTV - \gd}^2_{\LtSR} \norm{\fdTV}_{\LtRt} \,.
    \end{equation}
Numerical results when using this approach are given in the following.

\begin{example}\label{example_3}
In the last example, we again consider the carved cheese and walnut cases and test the performance of the modified L-curve method when our proposed method and the Tikhonov regularization with total variation (\ref{eq_TVregu2}) are implemented to solve the preprocessed problem
    \begin{equation*}
        \Rt f = \gd \,.
    \end{equation*}
To this end, we choose two regularization parameter sets for both approaches and calculate the minimizers of both modified L-curve methods (\ref{eq_Lcurve1}) and (\ref{eq_Lcurve2}). To reduce the influence of the randomness, we again test these examples $10$ times and provide the mean values of the MSE, PSNR, SSIM in Table~\ref{ex3_table1} and Table~\ref{ex3_table3}, respectively. Similarly as before, we observed that our proposed method is more robust than the approach (\ref{eq_TVregu2}) under the heuristic modified L-curve methods (\ref{eq_Lcurve1}) and (\ref{eq_Lcurve2}).
    \begin{table}[ht!]\centering
		\begin{tabular}{|c|c|c|c|c|}
            \hline       &\multicolumn{2}{|c|}{$\delta=0.3$}&\multicolumn{2}{|c|}{$\delta=0.5$}\\
            \cline{2-5}           &Our method&Total Variation&Our method&Total Variation\\
			\hline MSE     &0.0416  &0.0311  &0.0462 & 0.0626          \\
			\hline PSNR    &13.8061    &15.0717  &13.3532 & 12.1634           \\
            \hline SSIM     &0.2741    &0.2482  &0.2582 & 0.2153                  \\
			\hline
		\end{tabular}
		\caption{Comparison of average MSE, SSIM and PSNR for the carved cheese under modified L-curve parameter choice rule.}\label{ex3_table1}
	\end{table}
\begin{table}[ht!]\centering
		\begin{tabular}{|c|c|c|c|c|}
            \hline       &\multicolumn{2}{|c|}{$\delta=0.3$}&\multicolumn{2}{|c|}{$\delta=0.5$}\\
            \cline{2-5}           &Our method&Total Variation&Our method&Total Variation\\
			\hline MSE     &0.0608  &0.0871  &0.0901 & 0.1252         \\
			\hline PSNR    &12.1631    &10.5987  &10.4553 & 9.0217           \\
            \hline SSIM     &0.2505    &0.1512  &0.1636 & 0.1116                  \\
			\hline
		\end{tabular}
		\caption{Comparison of average MSE, SSIM and PSNR for the walnut under modified L-curve parameter choice rule.}\label{ex3_table3}
	\end{table}
\end{example}

\section*{Acknowledgement}
This work is supported by Key-Area Research and Development Program of Guangdong Province (No.2021B0101190003). This research was funded in part by the Austrian Science Fund (FWF) SFB 10.55776/F68 ``Tomography Across the Scales'', project F6805-N36 (Tomography in Astronomy). For open access purposes, the authors have applied a CC BY public copyright license to any author-accepted manuscript version arising from this submission. S.~Lu is supported by NSFC (No.11925104), and the Sino-German Mobility Programme (M-0187) by Sino-German Center for Research Promotion.

Part of the results are done when the first author visited the fourth author at the Johann Radon Institute for Computational and Applied Mathematics (RICAM) in 2021. She would like to thank him for the invitation and kind hospitality.

\bibliographystyle{plain}
{\footnotesize
\bibliography{mybib}
}

\end{document}